\theoremstyle{plain}
\newtheorem{theo}{Theorem}[section]
\newtheorem{cor}[theo]{Corollary}
\newtheorem{prop}[theo]{Proposition}
\theoremstyle{definition}
\def\e{\varepsilon}
\def\vf{\varphi}
\def\d{\delta}
\def\D{\Delta}
\def\k{\kappa}
\def\DD{\mathbb D}
\def\l{\lambda}
\def\W{\mathcal W}
\def\s{\sigma}
\def\x{\times}
\def \R{\mathbb R}
\def \H{\mathbb H}
\def\om{\omega}
\def\wt{\widetilde}
\def\({\biggl(}
\def\){\biggr)}
\def\<{\bold\langle}
\def\>{\bold\rangle}
\def\M{\widetilde {M}}
\def\MM{\widehat {M}}
\begin{document}

\title{Linear drift and entropy for regular covers}
\author{Fran\c cois Ledrappier}
\address{LPMA, Bo\^ite Courrier 188, 4, Place Jussieu, 75252 PARIS cedex 05, France}\email{francois.ledrappier@upmc.fr}
\begin{abstract} We consider a regular Riemannian cover $\M$ of a compact Riemannian manifold. The linear drift $\ell$ and the Kaimanovich entropy $h$ are geometric invariants defined by asymptotic properties of the Brownian motion on $\M$. We show that  $\ell^2 \leq h$.
\end{abstract}
\maketitle

\vskip 1cm
\footnote{{\it {Keywords:}} Entropy, Riemannian covers. {\it {AMS 2010 Classification:}} 58J65 (53C20, 37 D40, 37A50)}
Let $\pi : \M \to M $ be a regular Riemannian cover of a compact manifold: $\M $ is a Riemannian manifold and there is a discrete group $G$ of isometries of $\M$ acting freely and such that the quotient $M = G \setminus \M $ is a compact manifold. The quotient metric makes $M$ a compact Riemannian manifold.

We consider the Laplacian $\D$ on $\M$, the corresponding heat kernel $\wt p(t, \wt x , \wt y)$ and the associated Brownian motion $\wt X_t, t \geq 0 $.The following quantities were introduced by Guivarc'h \cite{Gu} and Kaimanovich \cite{K1}, respectively, as almost everywhere limits on the space of trajectories of the Brownian motion $\wt X$:
\begin{itemize}
\item the linear drift $ \ell \;: = \; \lim _{t \to \infty } \frac{1}{t} d_{\M} (\wt X_0, \wt X_t).$
\item the entropy $ h \; := \; \lim_{t \to \infty} -\frac{1}{t} \ln \wt p(t,\wt X_0,\wt X_t).$
\end{itemize}

In this note we prove the following 

\

{\bf Theorem A.} {\it Let $\pi : \M \to M $ be a regular Riemannian cover of a compact manifold. With the above notations, we have:}
\begin{equation}\label{main}
\ell ^2 \; \leq \; h.
\end{equation}

In the case when $\M$ is the universal covering of a compact manifold with negative curvature, inequality (\ref{main}) is due to V. Kaimanovich (\cite {K1}). Moreover in that case, there is equality in  (\ref{main}) if, and only if, the manifold $\M$ is a symmetric space of negative curvature (se the discussion below in section 1). For a general cover, it follows from (1) that, whenever $h=0$ (which is equivalent to the Liouville property of $\M$ \cite {D}, \cite {KV}), then $\ell = 0$. This was shown in \cite {KL1} by using discretization of the Brownian motion and a qualitative result for random walks. Indeed, similar quantities can be defined for a symmetric random walk on a finitely generated group, where the distance on the group is the word distance. A precise result similar to  (1) is not known for discrete random walks. There are estimates for symmetric random walks with finite support (\cite{Va}) or finite second moment (\cite{EK}).

% It is known that given a symmetric random walk with finite support $S$, there  is a constant $C = C(\# S)$ such that $\ell ^2 \leq C h$ (\cite {Va}). A. Erschler and A. Karlsson (\cite{EK}) have a similar result for random walks with finite second moment, but the way the constant depends on the measure is not explicit. 

\

Let $v$ be the volume entropy of $\M$
$$v \; = \; \lim _{R \to \infty  } \frac{\ln {\textrm {vol}} (B_{\M} (x_0, R))}{\ln R} ,$$
where $B_{\M} (x_0, R)$ is the ball of radius $R$ in $\M $ about a given point $x_0$ and vol is the Riemannian volume. It holds: $h \leq \ell v $ ( \cite {Gu}).
\begin{cor} Let $\pi : \M \to M $ be a regular Riemannian cover of a compact manifold. With the above notations, $\ell \leq v$ and $h \leq v^2$. Either equality $\ell = v, h = v^2$  implies equality in (1).
\end{cor}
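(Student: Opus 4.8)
The plan is to combine Theorem~A with the inequality $h \leq \ell v$ of Guivarc'h recalled just above the Corollary. Chaining the two gives
\begin{equation*}
\ell^2 \;\leq\; h \;\leq\; \ell v .
\end{equation*}
If $\ell > 0$ one divides by $\ell$ to get $\ell \leq v$; if $\ell = 0$ the inequality $\ell \leq v$ is trivial since $v \geq 0$. In either case $\ell \leq v$, and then $h \leq \ell v \leq v^2$. This already yields the two asserted bounds $\ell \leq v$ and $h \leq v^2$.

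For the equality statements I would argue by cases. Suppose first $\ell = v$. Then $\ell v = \ell^2$, so the chain $\ell^2 \leq h \leq \ell v$ collapses to $\ell^2 = h = \ell v$, i.e.\ equality holds in (\ref{main}). Suppose instead $h = v^2$. If $v = 0$, then $h = 0$, and $\ell^2 \leq h$ forces $\ell = 0$, so $\ell^2 = 0 = h$; if $v > 0$, then $h \leq \ell v$ gives $v^2 \leq \ell v$, hence $v \leq \ell$, which together with $\ell \leq v$ forces $\ell = v$, and we are reduced to the previous case, again obtaining $\ell^2 = h$. Thus either of the equalities $\ell = v$ or $h = v^2$ implies equality in (\ref{main}).

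The argument is entirely formal once Theorem~A and the estimate $h \leq \ell v$ are available; the only point requiring attention is that one cannot divide by $\ell$ or by $v$ without knowing they are nonzero, so the degenerate cases $\ell = 0$ and $v = 0$ must be isolated as above. I do not expect any genuine obstacle beyond this bookkeeping.
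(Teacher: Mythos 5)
Your argument is correct and is exactly the intended one: the paper states the Corollary without a separate proof, as the immediate formal consequence of chaining Theorem A with Guivarc'h's inequality $h \leq \ell v$, which is precisely what you do (including the harmless bookkeeping for $\ell=0$ or $v=0$). Nothing further is needed.
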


Let $\l$ be the bottom of the spectrum of the Laplacian on $\M$:
$$ \l \; := \; \inf _{f \in C^2_K (\M)} \frac{\int _{\M} \| \nabla f \|^2}{\int _{\M} f^2}.$$
Clearly (by considering $C^2_K$ approximations to the functions $e^{-sd(x_o,.)}$ for $s > v/2$), we have $4 \l \leq v^2$. It can be shown that $4 \l \leq h $ (\cite {L1}, Proposition 3). Therefore,
\begin{cor} Let $\pi : \M \to M $ be a regular Riemannian cover of a compact manifold. With the above notations, equality $4 \l  = v^2$  implies equality in (1).
\end{cor}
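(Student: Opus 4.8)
The plan is to derive this corollary as a purely formal consequence of the results recalled above, with no further geometric input. The ingredients are: the inequality $4\l \le h$ of \cite{L1}, Proposition~3, stated just before the corollary; the bound $h \le v^2$ together with the implication ``$h = v^2 \Rightarrow$ equality in (\ref{main})'' furnished by the preceding corollary; and, of course, Theorem~A itself.

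Assume $4\l = v^2$. Then
\[
v^2 \;=\; 4\l \;\leq\; h \;\leq\; v^2,
\]
the first inequality being $4\l \le h$ and the second being $h \le v^2$ from the preceding corollary; hence $h = v^2$. (Alternatively, chaining $v^2 = 4\l \le h \le \ell v \le v \cdot v = v^2$ via Guivarc'h's bound $h \le \ell v$ and $\ell \le v$ shows in addition that $\ell = v$.) Now apply the last assertion of the preceding corollary: the equality $h = v^2$ forces equality in (\ref{main}), i.e.\ $\ell^2 = h$, which is the desired conclusion.

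There is no genuine obstacle in this deduction once $4\l \le h$ is available; it is a one-line chase through the inequalities already established. The only point to be careful about is that the nontrivial bound $4\l \le h$ of \cite{L1} is really needed here — the elementary inequality $4\l \le v^2$ alone would not suffice — while all the substantive content remains in Theorem~A and in the preceding corollary.
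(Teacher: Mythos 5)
Your deduction is correct and is exactly the argument the paper intends by its ``Therefore'': from $4\l=v^2$, the bound $4\l\le h$ of \cite{L1} and $h\le v^2$ from the preceding corollary force $h=v^2$, and the equality case of that corollary then gives equality in (\ref{main}). Nothing further is needed.
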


\

Our proof of (1) is based on the construction of a compact bundle space $X_M$ over $M$ which is laminated by spaces modeled on $\M$ and of a laminated Laplacian. In the case when $M$ has negative curvature and $\M$ is the universal cover of $M$, the bundle space contains the  unit tangent bundle $T^1M$ and the lamination on $T^1M$ is the weak stable foliation of the geodesic flow. The foliated Laplacian and the associated harmonic measure are useful tools for the geometry and the dynamics of the geodesic flow (see \cite {Ga}, \cite{K1}, \cite{L2}, \cite {Y}, \cite{H}). In Section 1, we construct the lamination in the general case and state the properties of the harmonic measures which lead to Theorem A. The laminated Laplacian defines a laminated Brownian motion, a diffusion on $X_M$  with the property that the trajectories remain in the same leaf for all time. Section 2 describes this diffusion. The rest of the paper is devoted to proving propositions \ref{Furst.} and \ref{basic}.

\section{The Busemann Lamination}

We consider the {\it Busemann compactification } of the metric space $\M$:  since the space $\M$ is a complete manifold, it is a   {\it proper} metric space (closed bounded subsets are compact). Fix a point $x_0 \in \M$ and define, for $x\in \M$ the function $\xi_x (z) $ on $\M$ by:
$$ \xi_x(z) \; = \; d(x,z) - d(x, x_0).$$
The assignment $x \mapsto \xi_x $ is continuous, one-to-one and takes values in a relatively compact set of functions for the topology of uniform convergence on compact subsets of $\M$. The  Busemann  compactification  $\MM$ of $\M$ is the closure of $\M$ for that topology. The space $\MM$ is a compact separable space. The {\it Busemann boundary } $\partial \M := \MM \setminus \M$ is made of Lipschitz continuous functions $\xi$  on $\M$ such that $\xi(x_0) = 0 $. Elements of $\partial \M$ are called {\it horofunctions}. Observe that we may extend by continuity the  action of $G$ from $\M$ to $\MM$, in such a way that for $\xi $ in  $\MM$ and $g $ in $G$,
$$ g.\xi (z) \; = \; \xi (g^{-1}z) - \xi (g^{-1}(x_0)).$$

We define now the {\it horospheric suspension }  $X_M$ of $M$ as the quotient of the space $\M \times \MM$ by the diagonal action of $G$. The projection onto the first component in $\M \times \MM$ factors into a projection from $X_M$ to $M$ so that the fibers are isometric to $\MM$. It is clear that the  space $X_M$ is metric compact. If $M_0 \subset \M$ is a fundamental domain for $M$, one can represent $X_M$ as $M_0 \x \MM$ in a natural way.

To each point $\xi \in \MM$ is associated the projection $W_\xi $ of $\M \times \{\xi \}$. As a subgroup of $G$, the stabilizer $G_\xi$ of the point $\xi$ acts discretely on $\M$ and 
the space $W_\xi$ is homeomorphic  to the quotient of $\M$ by  $G_\xi$. We put on each $W_\xi$ the smooth structure and the metric inherited from $\M$.  The manifold $W_\xi $ and its metric vary continuously on $X_M$. The collection of all $W_\xi, \xi \in \MM$ form a continuous lamination $\W_M$ with leaves which are manifolds locally modeled on $\M$. In particular, it makes sense to differentiate along the leaves of the lamination and we denote $\D^{\W}$ the laminated Laplace operator acting on functions which are smooth along the leaves of the lamination. A Borel measure on $X_M$ is called {\it harmonic} if it satisfies, for all $f$ for which it makes sense, $$ \int \D^\W f dm \; = \; 0. $$
By \cite{Ga}, there exist harmonic measures and the set of harmonic probability measures is a weak* compact set of measures on $X_M$. Moreover, if $m$ is a harmonic measure and $\wt m$ is the $G$-invariant measure which extends $m$ on $\M \x \MM$, then (\cite {Ga}), there is a finite measure $\nu $ on $\MM$ and, for $\nu $-almost every $\xi$, a positive harmonic function $k_\xi (x) $ with $k_\xi (x_0) = 1$ such that the measure $m$ can be written as;
$$ \wt m \; = \; k_\xi (x) (dx \x \nu (d\xi)).$$

The harmonic probability measure $m$ is called ergodic if it is extremal among harmonic probability measures. In that case, for $\nu$-almost every $\xi$, the following limits exist along almost every trajectory of the Brownian motion (see \cite{K2} and section 3 below):
\begin{itemize}
\item the linear drift of $m$ $ \ell (m)  \; := \; \lim _{t \to \infty } \frac{1}{t} \xi(\wt X_t). $
\item the transverse entropy $k(m) \; := \; \lim _{t\to \infty} -\frac{1}{t} \ln k_\xi (\wt X_t).$
\end{itemize}

The proof of Theorem A reduces to the three following results;
\begin{prop} \label{Furst.} With the above notations, there exists an ergodic harmonic measure such that $\ell (m) = \ell $.
\end{prop}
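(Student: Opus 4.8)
The plan is to realize the linear drift $\ell$ as an average, over a suitable harmonic measure $m$ on $X_M$, of the leafwise drift functional $\xi \mapsto \ell(m)$, and to do so by pushing forward the Brownian motion on $\M$ together with its chosen horofunction limit point. First I would recall that, since $\M$ is proper, almost every trajectory $\wt X_t$ of the Brownian motion converges in the Busemann compactification $\MM$ to a horofunction $\xi_\infty$; this gives a harmonic measure $\eta_{\wt x}$ on $\MM$ (the hitting measure from $\wt x$), and the family $\{\eta_{\wt x}\}$ is $G$-equivariant. From the joint distribution of $(\wt X_0, \xi_\infty)$ one builds a $G$-invariant measure $\wt m$ on $\M \times \MM$ of the form $\wt m = k_\xi(x)\,(dx \times \nu(d\xi))$ (here one takes $dx$ to be the $G$-invariant Riemannian volume normalized over a fundamental domain, and $k_\xi$ the Radon–Nikodym cocycle of the harmonic measures, i.e. the Martin kernel associated to $\xi$), which descends to a harmonic probability measure $m$ on $X_M$. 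The point of using the \emph{horofunction} boundary rather than any other compactification is exactly that the Busemann cocycle $\xi(\wt X_t)$ controls the distance: one has $d(\wt X_0, \wt X_t) \geq -\xi_\infty(\wt X_t)$ for the limiting horofunction, with asymptotic equality along the trajectory, which is what forces the drift of $m$ to be $\ell$ and not merely $\leq \ell$.

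Next I would decompose $m$ into its ergodic components. Each ergodic component $m_\omega$ is again a harmonic probability measure, and $\ell(m) = \int \ell(m_\omega)\, d\omega$. On the other hand, by the definition of $\ell$ as an almost-sure limit (Guivarc'h) and the construction of $\wt m$, one checks that $\ell(m) = \ell$: concretely, $\frac{1}{t}\xi_\infty(\wt X_t) \to -\ell$ almost surely (the sign convention depending on how horofunctions are normalized; with $\xi_x(z) = d(x,z) - d(x,x_0)$ the horofunction of a limit point is asymptotically $-d(\wt X_0, \cdot)$ along the trajectory), and integrating this limit against the law of the Brownian motion started from $\wt m$ and using dominated convergence (the increments of $\xi(\wt X_t)$ are controlled by $d(\wt X_0,\wt X_t)$, which has at most linear growth in $L^1$ by compactness of $M$ and standard heat kernel estimates) yields $\ell(m) = \ell$. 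Since $\ell(m)$ is the $\omega$-average of the $\ell(m_\omega)$, and since each $\ell(m_\omega) \le \ell$ (because $|\xi(z)| \le d(x_0,z)$ forces the leafwise drift of any harmonic measure to be at most the ambient drift $\ell$ — this is the elementary inequality $\ell(m') \le \ell$ valid for every harmonic $m'$), the average can equal $\ell$ only if $\ell(m_\omega) = \ell$ for $\omega$-almost every $\omega$. Picking any such $\omega$ gives the desired ergodic harmonic measure.

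The main obstacle I expect is \emph{not} the ergodic-decomposition averaging argument, which is soft, but rather establishing cleanly that the hitting-measure construction produces a genuinely harmonic measure on the compact laminated space $X_M$ in the sense of $\int \D^\W f\, dm = 0$, and that its leafwise drift $\ell(m)$ — defined via the horofunction cocycle along leafwise Brownian motion — coincides with the limit defining $\ell(m)$ in the statement, i.e. that the leafwise Brownian motion on $W_{\xi}$ faithfully models $\wt X_t$ and that the horofunction evaluated along it has the same asymptotic slope. This requires (i) identifying the leafwise Laplacian with the pullback of $\D$, so that leafwise Brownian paths lift to Brownian paths on $\M$; (ii) showing the boundary point $\xi$ carried in the $X_M$-coordinate is almost surely the limiting horofunction $\xi_\infty$ of the lifted path, which is exactly the statement that $\nu$ and $k_\xi$ come from the hitting measure (a $0$–$1$-law / conditioning argument, essentially the fact that the Poisson boundary is a quotient of the horofunction boundary and the Busemann cocycle is a coboundary-free representative); and (iii) the dominated-convergence step, where one needs the uniform (in starting point, by $G$-invariance and compactness of $M$) integrability of $\frac{1}{t} d(\wt X_0, \wt X_t)$. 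I would handle (iii) by the Itô formula / Kendall-type estimate giving $\E_{\wt x}[d(\wt x, \wt X_t)] = O(t)$ uniformly in $\wt x$ over a fundamental domain. Granting these, the proposition follows.
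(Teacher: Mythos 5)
Your concluding step---decomposing a harmonic measure $m$ with $\ell(m)=\ell$ into ergodic components and using the inequality $\ell(m')\le\ell$ for every harmonic $m'$ together with the affinity of $m'\mapsto\ell(m')$---is exactly the paper's final argument and is sound. The gap is in the construction of $m$ itself. You begin with the assertion that almost every Brownian path converges in the Busemann compactification $\MM$ to a horofunction $\xi_\infty$, and that $\frac1t\xi_\infty(\wt X_t)$ has limit $\mp\ell$ along the path. Neither statement is available for a general regular cover, and the first is simply false in basic examples: for $\M=\R^n$ covering a flat torus, $\wt X_t/|\wt X_t|$ does not converge, so $\xi_{\wt X_t}$ has no limit in $\MM$ and no hitting measure on $\partial\M$ exists. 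Even in settings where boundary convergence does hold, the asymptotic equality $d(\wt X_0,\wt X_t)\sim-\xi_\infty(\wt X_t)$ (the ``tracking'' property) is itself a nontrivial theorem of Karlsson--Ledrappier type, not a consequence of $1$-Lipschitzness; your item (ii), identifying $\nu$ with a hitting measure and invoking the Poisson boundary as a quotient of the horofunction boundary, presupposes structure the general setting does not provide. There is also a sign problem: with the normalization $\xi_x(z)=d(x,z)-d(x,x_0)$ and $\ell(m)=\lim_t\frac1t\xi(\wt X_t)$, a \emph{forward} limit horofunction would yield drift $-\ell$; the measure one needs places the horofunction ``behind'' the trajectory (equivalently, comes from the time-reversed process), so even granting convergence your construction would have to be modified to produce $+\ell$ when $\ell>0$.

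The paper's proof is designed precisely to avoid any boundary convergence. It pushes forward \emph{interior} points, setting $\nu_t:=\int_{\M}\xi_\ast\bigl(\wt p(t,x,y)\,dy\bigr)\frac{dx}{{\textrm{vol}}(M)}$, a $G$-invariant measure on $\M\times\MM$ carried by the image of $\M$ in $\MM$; the symmetry and semigroup property of $\wt p$ give the transport identity $\int\wt q(s,\cdot,\cdot)\,d\nu_t=\nu_{t+s}$, so every weak* limit $m_0$ of the Ces\`aro averages $\frac1T\int_0^T\nu_t\,dt$ is harmonic. The drift computation for $m_0$ then telescopes, and $\ell(m_0)=\ell$ follows from the subadditive-ergodic-theorem characterization $\ell=\lim_{T\to\infty}\frac1T\int_{M_0\times\M}d(x,z)\,\wt p(T,x,z)\frac{dx\,dz}{{\textrm{vol}}(M)}$, after which the ergodic decomposition finishes as you indicate. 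To salvage your route you would in effect have to prove a Karlsson--Ledrappier noncommutative ergodic theorem (existence, for almost every path, of \emph{some} horofunction tracking the drift), which is exactly the content that the Ces\`aro-limit construction packages in integrated form; assuming almost-sure convergence to the Busemann boundary is not a legitimate shortcut.
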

\begin{prop} \label{basic} For all harmonic measure $m$, we have $\ell ^2(m) \leq k(m)$ with equality  only if the harmonic functions $k_\xi $ are such that $\nabla ^\W \ln k_\xi = - \ell (m)  \nabla ^\W \xi$  $m$-almost everywhere. 
\end{prop}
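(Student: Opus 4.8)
The plan is to derive the inequality $\ell^2(m) \le k(m)$ from an integral identity relating the drift, the transverse entropy, and the $L^2$-norm of the leafwise gradient of the horofunction, obtained by differentiating heat-kernel quantities and applying the harmonicity of $m$. Concretely, for a harmonic measure $m$ written as $\wt m = k_\xi(x)\,(dx \times \nu(d\xi))$, I would first establish two ``linear response'' formulas. For the drift, using that $\xi$ is a horofunction with $\|\nabla^\W \xi\| = 1$ almost everywhere (horofunctions are limits of distance functions, hence have unit gradient where differentiable, and in this Riemannian setting that is almost everywhere along leaves), one expects
\begin{equation}\label{drift-formula}
\ell(m) \; = \; -\int_{X_M} \langle \nabla^\W \ln k_\xi , \nabla^\W \xi \rangle \; dm.
\end{equation}
For the transverse entropy, the analogous computation with the positive harmonic function $k_\xi$ in place of $\xi$ gives the well-known expression
\begin{equation}\label{entropy-formula}
k(m) \; = \; \int_{X_M} \| \nabla^\W \ln k_\xi \|^2 \; dm.
\end{equation}
Both identities come from writing $\xi(\wt X_t)$ (resp. $\ln k_\xi(\wt X_t)$) via Itô's formula along the leaf: the drift term is $\frac12 \D^\W$ applied to the function, and the martingale part has quadratic variation governed by $\|\nabla^\W(\cdot)\|^2$; harmonicity of $m$ (i.e. $\int \D^\W f\,dm = 0$ for admissible $f$) is what makes the time-averaged drift term computable as a single integral against $m$. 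Note $\D^\W \xi$ need not vanish, but $\D^\W \ln k_\xi = -\|\nabla^\W \ln k_\xi\|^2$ since $k_\xi$ is harmonic, and the cross term in expanding $\D^\W(\ln k_\xi)$ against $\xi$ is what produces \eqref{drift-formula}; making these manipulations rigorous (integrability of the gradients, justification of the exchange of limit and integral, the ergodic-theorem identification of the almost-sure limits $\ell(m), k(m)$ with the integrals) is the technical heart and should be handled — or cited — in Section 3.

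Granting \eqref{drift-formula} and \eqref{entropy-formula}, the proposition is immediate from Cauchy--Schwarz:
\begin{equation}\label{cs}
\ell^2(m) \; = \; \left( \int \langle -\nabla^\W \ln k_\xi , \nabla^\W \xi \rangle \, dm \right)^2 \; \le \; \int \| \nabla^\W \ln k_\xi \|^2 \, dm \cdot \int \| \nabla^\W \xi \|^2 \, dm \; = \; k(m),
\end{equation}
using $\|\nabla^\W \xi\| = 1$ $m$-a.e. in the last step. The case of equality in Cauchy--Schwarz forces $-\nabla^\W \ln k_\xi$ to be a (measurable, $m$-a.e. constant) scalar multiple of $\nabla^\W \xi$; pairing that proportionality against $\nabla^\W \xi$ and using \eqref{drift-formula} together with $\|\nabla^\W\xi\|=1$ pins the scalar down to $\ell(m)$, giving $\nabla^\W \ln k_\xi = -\ell(m)\,\nabla^\W \xi$ $m$-almost everywhere, as claimed.

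The main obstacle is establishing \eqref{drift-formula} and \eqref{entropy-formula} with full rigor — in particular, that the almost-sure limits defining $\ell(m)$ and $k(m)$ exist and equal these spatial integrals. This requires the stationarity and ergodic properties of the leafwise Brownian motion with respect to the harmonic measure $m$ (a Markov-process ergodic theorem, à la Kaimanovich), control of the horofunction cocycle $\xi(\wt X_t) - \xi(\wt X_0)$ under the dynamics, and enough regularity of $k_\xi$ (Harnack estimates, uniform over the leaves by compactness of $X_M$) to legitimize differentiating under the integral sign and to ensure $\nabla^\W \ln k_\xi \in L^2(m)$. Once the machinery of Sections 2 and 3 is in place, the inequality itself is just Cauchy--Schwarz; the content is entirely in the two formulas.
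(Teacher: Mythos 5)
Your route is the paper's: your drift and entropy formulas are exactly Propositions \ref{Vadim2} and \ref{Vadim1} (proved there in Sections 5 and 6), and, granted those, the paper's own proof of Proposition \ref{basic} is likewise a Cauchy--Schwarz argument. The genuine problem is your claim that $\|\nabla ^\W \xi \| = 1$ $m$-almost everywhere, justified by ``horofunctions are limits of distance functions, hence have unit gradient where differentiable''. That deduction is invalid: a uniform limit of $1$-Lipschitz functions with a.e.\ unit gradient is only $1$-Lipschitz, so Rademacher gives $\|\nabla ^\W \xi \| \leq 1$ a.e.\ and nothing more; in the generality of this paper the points of $\partial \M$ need not be Busemann functions of geodesic rays, and the paper is deliberately careful to assert only $\|Z_\xi\| \leq 1$. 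For the inequality this costs nothing (replace your final equality by $\leq$), but your equality analysis uses the unit-norm claim twice --- to close the chain with $\int \|\nabla^\W \ln k_\xi\|^2 dm \cdot \int \|\nabla^\W \xi\|^2 dm = k(m)$ and to pin the proportionality constant to $\ell(m)$ --- so as written that part has a gap. It is patchable: with only $\|\nabla^\W\xi\|\le 1$, equality $\ell^2(m) = k(m)$ forces either $k(m)=0$ (then $\nabla^\W \ln k_\xi = 0$ and, by the drift formula, $\ell(m)=0$, so the conclusion is trivial) or $\int \|\nabla^\W \xi\|^2 dm = 1$ together with equality in Cauchy--Schwarz, after which your identification of the constant goes through. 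Alternatively argue as the paper does: first $\bigl(\int \langle Z_\xi, \nabla^\W \ln k_\xi\rangle dm\bigr)^2 \leq \int |\langle Z_\xi, \nabla^\W \ln k_\xi\rangle|^2 dm$ (Jensen for the probability measure $m$), then the pointwise bound $|\langle Z_\xi, \nabla^\W \ln k_\xi\rangle| \leq \|\nabla^\W \ln k_\xi\|$ from $\|Z_\xi\|\leq 1$; the equality case of this chain yields $\nabla^\W \ln k_\xi = -\ell(m) Z_\xi$ directly, with no appeal to unit gradients.

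On the two formulas you defer: outsourcing them is consistent with the paper's structure, but note that the It\^o-on-$\xi$ derivation you sketch for the drift formula is precisely Kaimanovich's argument, which requires the horofunctions to be $C^2$. In the general setting they are merely Lipschitz, and the paper's proof of Proposition \ref{Vadim2} replaces $\xi$ by the heat-kernel mollification $\vf_\e(x,\xi) = \int \wt p(\e,x,y)\,\xi(y)\,dy$, integrates by parts against $m$ via the leafwise Green identity (\ref{Green}), and then lets $\e \to 0$ using weak convergence of the gradients; that smoothing step is the actual content beyond the $C^2$ case, so your ``handled or cited in Section 3'' must include it. The entropy formula (Proposition \ref{Vadim1}) raises no such issue, since $\ln k_\xi$ is smooth along the leaves.
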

\begin{prop} \label{entropy} For all ergodic harmonic measure $m$, we have $k(m)  \leq h.$ \end{prop}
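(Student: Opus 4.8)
The plan is to compare, along Brownian trajectories, the transverse entropy $k(m) = \lim_t -\tfrac1t \ln k_\xi(\wt X_t)$ with the Kaimanovich entropy $h = \lim_t -\tfrac1t \ln \wt p(t,\wt X_0,\wt X_t)$, and the key point is that $k_\xi$ is precisely the Radon--Nikodym cocycle of the harmonic measure class on the leaf $W_\xi$. First I would recall from the Garnett decomposition $\wt m = k_\xi(x)\,(dx\times\nu(d\xi))$ that, fixing a $\nu$-generic $\xi$, the function $k_\xi$ is a positive harmonic function on $\M$ (more precisely on $W_\xi$) with $k_\xi(x_0)=1$; such a function is represented by a harmonic measure $\mu^\xi_{x}$ on the space of trajectories (or on a suitable boundary) via the Poisson formula, and $k_\xi(x) = \tfrac{d\mu^\xi_x}{d\mu^\xi_{x_0}}$. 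Equivalently, $k_\xi(x)$ is the Doob $h$-transform density, and $-\tfrac1t\ln k_\xi(\wt X_t)$ converges a.s. to the (differential, or Furstenberg-type) entropy of the harmonic measure $\mu^\xi_{x_0}$ as seen through this cocycle.

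Next I would bound this differential entropy by the heat-kernel entropy. The standard mechanism (going back to Kaimanovich, and to Derriennic's "fundamental inequality" in the discrete setting) is: for any positive harmonic function $k$ on $\M$ with $k(x_0)=1$, one has
\begin{equation*}
-\frac1t \int \ln k(\wt x)\, \wt p(t,x_0,\wt x)\, d\mathrm{vol}(\wt x) \;\le\; -\frac1t \int \ln \frac{\wt p(t,x_0,\wt x)}{\,?\,}\, \wt p(t,x_0,\wt x)\, d\mathrm{vol}(\wt x),
\end{equation*}
but cleanly: since $\wt x\mapsto \wt p(t,x_0,\wt x)$ and $\wt x\mapsto k(\wt x)\,\wt p(t,x_0,\wt x)$ are both integrable with the same normalization issues controlled by harmonicity, Jensen's inequality applied to the convex function $u\mapsto u\ln u$, or directly the nonnegativity of relative entropy $D\big(k\,\wt p(t,x_0,\cdot)\,\|\,\wt p(t,x_0,\cdot)\big)\ge 0$, gives
\begin{equation*}
\int k(\wt x)\ln k(\wt x)\,\wt p(t,x_0,\wt x)\,d\mathrm{vol}(\wt x)\;\ge\;0,
\end{equation*}
and a parallel argument comparing the law of $\wt X_t$ under the $k$-transformed (Doob) diffusion with its law under the original diffusion identifies $k(m)$ with a limit of such relative entropies divided by $t$, which is dominated by $h$. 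Concretely: under the Doob transform by $k_\xi$, the process $\wt X_t$ has law $k_\xi(\wt x)\wt p(t,x_0,\wt x)\,d\mathrm{vol}$; the quantity $-\tfrac1t\ln k_\xi(\wt X_t)$ averaged against this law is $-\tfrac1t\int k_\xi\ln k_\xi\,\wt p(t,x_0,\cdot)\le -\tfrac1t\int k_\xi \ln\wt p(t,x_0,\cdot)\,d\mathrm{vol} + \tfrac1t\int k_\xi\ln(k_\xi\wt p)\,d\mathrm{vol}$, and the last term is $\le$ the entropy of the $k$-diffusion's own transition, which by the variational characterization of heat-kernel entropy is $\le h$. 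I would then integrate over $\xi$ against $\nu$ and invoke ergodicity of $m$ together with the subadditive/Kingman ergodic theorem to pass from the averaged inequality to the almost-sure one $k(m)\le h$.

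The main obstacle I anticipate is the measure-theoretic bookkeeping connecting the leafwise object $k_\xi$ on $W_\xi$ (a manifold that is only a quotient $\M/G_\xi$, not $\M$ itself) to genuine heat-kernel quantities on $\M$, and making the Doob-transform / relative-entropy comparison rigorous when the relevant densities need not be probability densities a priori and the boundary behaviour of $k_\xi$ is only controlled $\nu$-a.e. One must check that the pullback of $k_\xi$ to $\M$ is still a positive harmonic function whose associated harmonic measure is absolutely continuous with respect to the full harmonic measure on $\partial\M$, so that the chain rule for Radon--Nikodym cocycles applies; this is where the hypothesis that $m$ is \emph{ergodic} is used, to guarantee that the $\nu$-a.e. limits defining $\ell(m)$ and $k(m)$ exist and are constant, and that the decomposition behaves well under the time-shift on trajectory space. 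Once that identification is in place, the inequality $k(m)\le h$ is essentially the statement that "conditional entropy $\le$ full entropy," i.e. the fundamental inequality of the Furstenberg--Derriennic--Kaimanovich theory transported to the Riemannian setting via \cite{K2}.
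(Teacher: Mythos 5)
There is a genuine gap: the step where $h$ actually enters is never proved, and the concrete inequalities you write are either not the ones needed or circular. The nonnegativity of the relative entropy $D\bigl(k_\xi\,\wt p(t,x_0,\cdot)\,\|\,\wt p(t,x_0,\cdot)\bigr)=\int k_\xi\ln k_\xi\,\wt p(t,x_0,\cdot)\,d\mathrm{vol}\ge 0$ is a statement about the law of the Doob-transformed diffusion, whereas $k(m)$ is by definition the asymptotic behaviour of $-\frac1t\ln k_\xi(\om(t))$ under the stationary leafwise Brownian motion $Q_m$, i.e.\ along ordinary Brownian paths with $\xi$ frozen, not along the $k_\xi$-conditioned process; averaging against the wrong law essentially reverses the sign of the quantity you want (in expectation $\ln k_\xi(\om(t))$ decreases along the free process and increases along the conditioned one). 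Likewise the identification of $k(m)$ with $\lim_t\frac1t D\bigl(k_\xi\wt p(t,x_0,\cdot)\|\wt p(t,x_0,\cdot)\bigr)$ is asserted, not proved, and is doubtful as stated: the $k_\xi$-process equidistributes with respect to the $k_\xi^2\,dx\otimes d\nu$ weighting, not with respect to $m=k_\xi\,dx\otimes d\nu$, so this limit is an integral of $\|\nabla^\W\ln k_\xi\|^2$ against the wrong measure. Most importantly, the only place $h$ appears in your argument is the claim that ``the entropy of the $k$-diffusion's own transition \dots is $\le h$'' by an unspecified variational characterization; that claim is essentially the proposition itself in disguise (it amounts to $h-k(m)\ge 0$), so the key step is circular. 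The ``?'' left in your first display and the divergent expression $\int k_\xi\ln\wt p\,d\mathrm{vol}$ are symptoms of the same missing step.

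For comparison, the paper's proof is short and different in structure: it introduces Kaimanovich's \emph{reverse entropy} $h'(m):=\lim_{t\to\infty}-\frac1t\ln\bigl(\wt p(t,\om(0),\om(t))\,k_\xi(\om(0))/k_\xi(\om(t))\bigr)$ along $Q_m$-almost every trajectory; since $-\frac1t\ln\wt p(t,\om(0),\om(t))\to h$ and $\frac1t\ln\bigl(k_\xi(\om(0))/k_\xi(\om(t))\bigr)\to k(m)$ almost surely, one has $h'(m)=h-k(m)$, and the whole content of the proposition is the single inequality $h'(m)\ge 0$. This nonnegativity is exactly where the conditional process enters: under $Q_m$, given $\xi$ and $\om(t)=y$, the density of $\om(0)$ is $\wt p(t,x,y)k_\xi(x)/k_\xi(y)$, so $h'(m)$ is the entropy of a conditional (time-reversed, Doob-transformed) process, and such an entropy is nonnegative by the argument of \cite{K1}, Section 4, which the paper cites rather than reproves. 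Your instinct to bring in the Doob transform and the Furstenberg--Derriennic--Kaimanovich ``fundamental inequality'' points at the right circle of ideas, but to close the gap you must either write down $h'(m)$ and prove (or cite) the nonnegativity of this conditional entropy, or supply an independent proof of your asserted bound by $h$; as written, neither is done.
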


The proof of Proposition \ref{Furst.} is an extension of the proof of the Furstenberg formula in \cite {KL2} and is given in section 4. Kaimanovich (\cite {K1})  proved Proposition \ref{basic} under the hypothesis that the horofunctions are of class $C^2$ by applying  It\^o's formula to the function $\xi$. In the general case, horofunctions are only uniformly 1-Lipschitz, but the integrated formulas of \cite{K1} are still valid (see Section 3). See \cite{K2} and Section 3 for Proposition \ref{entropy}.

Assume that $\M$ is the universal cover of a negatively curved compact manifold $M$. Then, $\M$ is homeomorphic to an open ball and the Busemann compactification is homeomorphic to the closure of the ball. In particular, for all $x \in \M$, the Busemann boundary is homeomorphic to the unit sphere in the tangent space $T_x\M$: a unit vector $v$ defines a unique geodesic  $\s _v(t)$ such that $\s_v(0) = x, \dot {\s}_v(0) = v.$ As $t \to \infty $, $\xi _{\s_v(t)}$ converges in $\MM$ towards the Busemann function $\xi _{\s_v (+\infty)}$ and $v \mapsto \xi _{\s_v (+\infty)}$ defines the homeomorphism between $T^1_x\M$ and $\partial \M$. In particular, $\partial \M $ is a closed $G$-invariant subset of $\MM$. We can identify $\M \x \partial \M $ with the unit tangent bundle $T^1\M$. The induced action of $G$ is the natural differential action on $T^1\M$. The quotient $T^1M$ is therefore identified with a closed subset of $X_M$. For $\xi \in \partial \M$, unit vectors $v$ such that $\s_v (+\infty ) = \xi$ form a stable manifold for the geodesic flow. The lamination $\W$ in $T^1M$ is the usual stable lamination of the geodesic flow. In this case, there is a unique harmonic probability measure $m$ (see \cite{Ga}, \cite{L2}, \cite {Y}; the proof shows that any harmonic measure on $X_M$ has to be carried by $(\M \x \partial \M)/ G$) and the support of the harmonic measure is the whole  $(\M \x \partial \M)/ G$ (see \cite{A}, \cite{S}; by compactness, the curvature is pinched betwen two negative constants). Proposition \ref{Furst.} (i.e. $\ell (m) = \ell$) and \ref{basic} are due to Kaimanovich (\cite{K1}). By Proposition \ref{basic}, if we have equality in (1), then the Busemann functions are such that $\D \xi $ is a constant, the manifold $\M$ is asymptotically harmonic.  It follows then from the combined works of Y. Benoist, G.Besson, G. Courtois, P. Foulon, S. Gallot and F. Labourie (\cite {FL}, \cite{BFL}, \cite{BCG}) that the manifold $\M $ is a symmetric space.

\section{Laminated Brownian motion}

The operator $\D^\W$ is Markovian ($\D^\W 1 = 0$) and in this section, we construct the  corresponding diffusion on $X_M$. As we detail now, this diffusion  is derived from  the Brownian motion on $\M$.

We define subspaces of trajectories in $C(\R_+, M), C(\R_+, \M), C_{\MM} (\R_+, \M \times \MM) $ and $C(\R_+, \W_M)$ and natural identifications: $C(\R_+, M), C(\R_+, \M)$ are the spaces of continuous functions from $\R_+$ into  respectively $M$ and $\M$ with the natural projection from $C(\R_+,\M) $ to $C(\R_+, M)$;  the space $C_{\MM} (\R_+, \M \times \MM)$ is the space of continuous functions from $\R_+$ into $\M \times \MM$ which are {\it constant}  on the second component, with the forgetful projection from $C_{\MM} (\R_+, \M \times \MM)$ to $C(\R_+, \M)$; the group $G$ acts on $C_{\MM} (\R_+, \M \times \MM)$ by postcomposition; the quotient space of $G$-orbits in $C_{\MM} (\R_+, \M \times \MM)$ is the space $C(\R_+, \W_M)$, with the natural projection from $C_{\MM} (\R_+, \M \times \MM)$ to $C(\R_+,\W_M)$. Elements of $C(\R_+, \W_M)$ can be seen as trajectories on $X_M$ which are included in a single leaf of the lamination $\W$. Translations over $\R_+$ act by precomposition on all our spaces of trajectories and the translation by $t$ will be denoted $\s_t$ on each of them.

\

 The operator $\D$ is uniformly elliptic on $\M$. The fundamental solution of the equation $\displaystyle \frac{\partial u}{ \partial t} = \D u$ is the heat kernel $\wt p(t,\wt x, \wt y)$. There is a unique family of probabilities $\wt {P}_x, x \in \M,$ on $C(\R_+, \M)$ such that $\{\om (t), t\in \R_+ \}$ is  a Markov process with generator $\D$. This means for example that we have for $f_j \in C_c(\M), j = 0,1,2, 0<s_1< s_2,$
 \begin{eqnarray*}
 & {}& \int f_0(\om(0)) f_1(\om (s_1)) f_2(\om (s_2)) d\wt{P}_x \\ &=& \; \int f_0 (x) f_1(y_1) f_2(y_2) \wt p (s_1, x, y_1) \wt p (s_2 - s_1, y_1, y_2) dy_1 dy_2.
 \end{eqnarray*}
 
 The family of measures $\wt {P}_x, x\in \M$ defines the Brownian motion on $\M$. See e.g. \cite {P} Chapter 4.8 for the following:
\begin{prop}\label{invariance}
Let  $\wt m$ a locally finite positive measure on $\M$. The following properties are equivalent:

the measure $\wt m$ satisfies, for all $f \in C^2_c (\M)$,  $\int \D f d\wt m = 0 $,

the measure $\wt m$ is of the form $k(y)dy$ where $k$ is a positive harmonic function,

the measure $\wt m $ is $\wt {p}$ invariant, i.e. for all $t >0 $, all $f \in C^2_c (\M)$, $$ \int _{\M} \left( \int _{\M} f(y) \wt{p} (t,x, y  ) dy \right) d\wt {m} (x) \; = \; \int _{\M} f(x) d\wt  m (x), $$

the  measure  $\wt {P}_{\wt m } : =  \int _{\M} \wt{P}_x d\wt m(x)$  on $C(\R_+, \M)$ is $\s $-invariant.

\end{prop}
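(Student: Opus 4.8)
The plan is to prove the four conditions equivalent through the chain $(1)\Leftrightarrow(2)$, $(2)\Leftrightarrow(3)$, $(3)\Leftrightarrow(4)$. It is worth locating the content in advance: $(1)\Leftrightarrow(2)$ is elliptic regularity, $(3)\Leftrightarrow(4)$ is the soft statement that a Markov process issued from an invariant law is stationary, and the only genuinely analytic input — and the only point at which compactness of $M$ is used — occurs inside $(2)\Leftrightarrow(3)$, in the implication that a positive harmonic density yields a $\wt p$-invariant measure.

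For $(2)\Rightarrow(1)$ one integrates by parts twice, using that $\D$ is self-adjoint for the Riemannian volume $dy$: $\int\D f\cdot k\,dy=\int f\cdot\D k\,dy=0$ for $f\in C^2_c(\M)$ once $\D k=0$. For $(1)\Rightarrow(2)$, the hypothesis says that the Radon measure $\wt m$ is a weak solution of $\D\wt m=0$; by hypoellipticity of $\D$ (Weyl's lemma) it has a smooth density, $\wt m=k\,dy$, and then $\int(\D f)\,k\,dy=0$ for all $f$ together with self-adjointness of $\D$ for $dy$ forces $\D k=0$ classically; moreover $k\ge0$ and, by the strong minimum principle, $k>0$ unless $k\equiv0$. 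For $(3)\Leftrightarrow(4)$: under $\wt P_{\wt m}$ the law of $\om(0)$ is $\wt m$ and the law of $\om(t)$ is the image of $\wt m$ under the transition kernel $\wt p(t,\cdot,\cdot)$; if $\wt P_{\wt m}$ is $\s$-invariant these coincide, which is exactly $(3)$, while conversely if $(3)$ holds the Markov property together with time-homogeneity makes every finite-dimensional distribution of $\wt P_{\wt m}$ translation invariant, giving $(4)$. (The passage from the test-function form of $(3)$ to the corresponding equality of measures is a routine approximation, $\wt m$ being locally finite.)

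It remains to treat $(2)\Leftrightarrow(3)$. Since the transition probabilities $\wt p(t,x,\cdot)$ are absolutely continuous, $(3)$ forces $\wt m=k\,dy$, and then, by the symmetry $\wt p(t,x,y)=\wt p(t,y,x)$ of the heat kernel, $(3)$ is equivalent to $P_tk=k$ for all $t>0$, where $P_tg(x):=\int g(y)\wt p(t,x,y)\,dy$. If $P_tk=k$, parabolic regularity gives $k$ a smooth version solving the stationary heat equation, so $\D k=0$; this is $(3)\Rightarrow(2)$. For $(2)\Rightarrow(3)$, take $k>0$ harmonic. By It\^o's formula ($\wt X$ has generator $\D$ and $\D k=0$), $k(\wt X_t)$ is a nonnegative local martingale under $\wt P_x$, hence a supermartingale, so $P_tk\le k$. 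The step I expect to be the main obstacle is the reverse inequality $P_tk\ge k$. Stopping at the exit time $\tau_n$ of the ball $B(x,n)$ makes $k(\wt X_{t\wedge\tau_n})$ a bounded martingale, so $\E_x\,k(\wt X_{t\wedge\tau_n})=k(x)$ for all $n$, and one must pass to $n\to\infty$, i.e.\ verify uniform integrability of $\{k(\wt X_{t\wedge\tau_n})\}_n$. This is where compactness of $M$ enters: the cover $\M$ then has bounded geometry (curvature bounds and a positive lower bound for the injectivity radius pass to $\M$), so $\M$ is stochastically complete, whence $\tau_n\to\infty$ and $k(\wt X_{t\wedge\tau_n})\to k(\wt X_t)$ $\wt P_x$-a.s.; the Harnack inequality gives $k(y)\le Ce^{a\,d(x,y)}k(x)$, while the Gaussian heat-kernel bounds give $\E_x\exp\!\big(2a\,\sup_{s\le t}d(x,\wt X_s)\big)<\infty$, so the stopped variables are bounded in $L^2(\wt P_x)$ and a fortiori uniformly integrable. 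Therefore $\E_x\,k(\wt X_t)=\lim_n\E_x\,k(\wt X_{t\wedge\tau_n})=k(x)$, i.e.\ $P_tk=k$. Apart from this argument every step is routine, and the whole statement is classical (cf.\ \cite{P}, Ch.~4.8).
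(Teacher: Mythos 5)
Your proposal is correct in substance, but note that the paper does not prove this proposition at all: it is quoted as a classical fact with a pointer to Pinsky's book (\cite{P}, Chapter 4.8), so there is no internal argument to compare against. Your write-up supplies what the citation hides, and it identifies the crux accurately: the equivalences $(1)\Leftrightarrow(2)$ (Weyl's lemma plus self-adjointness of $\D$ for the Riemannian volume) and $(3)\Leftrightarrow(4)$ (stationarity of the initial law versus stationarity of the process) are soft, while the genuine content is that an infinitesimally invariant measure, i.e.\ a positive harmonic density $k$, is actually invariant for the heat semigroup, $P_tk=k$ and not merely $P_tk\le k$; this is false on general complete manifolds (already $k\equiv 1$ fails when $\M$ is stochastically incomplete), and your use of bounded geometry inherited from the compact quotient is exactly the hypothesis that rescues it. The tools you invoke are the right ones and are available here: the Cheng--Yau gradient estimate gives $k(y)\le k(x)e^{a\,d(x,y)}$ for a global positive harmonic function under a Ricci lower bound, stochastic completeness gives $\tau_n\to\infty$, and uniform integrability of the stopped martingale then yields $\E_x k(\wt X_t)=k(x)$. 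One small imprecision worth tightening: the bound $\E_x\exp\bigl(2a\,\sup_{s\le t}d(x,\wt X_s)\bigr)<\infty$ does not follow from the fixed-time Gaussian heat-kernel estimate alone (the one the paper quotes from \cite{CLY}); you need the corresponding exit-time estimate $\P_x\bigl(\sup_{s\le t}d(x,\wt X_s)\ge R\bigr)\le Ce^{-R^2/Ct}$, which does hold under bounded geometry (e.g.\ by comparison of the radial process or Grigor'yan-type arguments) but should be cited or proved rather than subsumed under the heat-kernel bound. Also, in $(3)\Rightarrow(2)$ you should record that the density produced by $(3)$, $k(y)=\int\wt p(t,x,y)\,d\wt m(x)$, is locally integrable because $\wt m$ is locally finite and the kernel has Gaussian decay, before applying parabolic regularity; with these two touch-ups the argument is complete and is essentially the standard proof behind the reference the paper gives.
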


By uniqueness, the family of measures $\wt {P}_x$ is $G$-equivariant and projects as a family of measures $P_x, x\in M$ on $C(\R_+,M)$ which defines the Brownian motion on $M$, with the same properties as above. In particular, the heat kernel $p(u,x,y) $ on $M$ is given by 
$$ p(u,x,y) \; = \; \sum _{g \in G} \wt {p}(u, \wt {x}, g\wt {y} ), $$
where $\wt {x} ,  \wt {y} $ are lifts in $\M $ of the points $x,y$ in $M$. The Lebesgue probability measure ${\textrm {Leb}}  := \frac{1}{{\textrm  {vol}} M} {\textrm {vol}}$ on $M$ satisfies  for all $f \in C^2 (M)$,  $\int \D f d{\textrm {Leb}}  = 0 $ and  for all $t>0$, $  \int _{M} \left( \int _{M} f(y) {p} (t,x, y  ) dy \right) d{\textrm {Leb}}  (x) \; = \; \int _{M} f(x) d{\textrm {Leb}}  (x).$  Moreover, the probability measure $P= \int _M P_x d{\textrm {Leb}}  (x) $ is invariant under the time shift $\sigma$. The probability ${\textrm {Leb}} $ is the only one with any of those properties. Indeed, by Proposition \ref{invariance}, the  $G$-invariant lift of such a measure $m$ to $\M$ has to be of the form  $k(y)d{\textrm {vol}}(y)$ where $k$ is a $G$-invariant positive harmonic function, and  $G$-invariant positive harmonic functions on $\M$ are lifts of positive harmonic functions on the compact manifold $M$ and are therefore constant.

\

Let $\xi _0$ be a point in $\MM$. There is a one-to-one correspondence between  the set of trajectories in $C_{\MM}(\R_+, \M \times \MM)$ satisfying $\xi(0) = \xi_0$ (and therefore $\xi(t) = \xi_0 $ for all $t$) and $C(\R_+, \M)$. For all $x \in \M$, the measure $\wt {P}_x$ defines a measure $\wt {Q}_{x,\xi_0}$  on  the set of trajectories in $C_{\MM}(\R_+, \M \times \MM)$ satisfying $(\om (0),\xi(0) )= (x,\xi_0)$. The family $\wt {Q}_{x,\xi}$ describes the Brownian motion along the leaves of the trivial fibration of $\M \times \MM $ into $\M \times \{\xi \}$'s. In particular $\wt q(u, (x,\xi), (y,\eta)) = \wt p (u, x,y) \d_\xi (\eta) $ is the Markov kernel of the diffusion with law $\wt Q_{x,\xi}$ and we may write for $f_j \in C_c(\M \x \MM), j = 0,1,2 $ and $0<s_1< s_2,$
 \begin{eqnarray*}
 & {}& \int f_0(\om(0), \xi (0)) f_1(\om (s_1), \xi(s_1)) f_2(\om (s_2), \xi(s_2)) d\wt{Q}_{x,\xi} \\ &=& \; \int f_0 (x,\xi) f_1(y_1,\xi ) f_2(y_2, \xi ) \wt p (s_1, x, y_1) \wt p (s_2 - s_1, y_1, y_2) dy_1 dy_2.
 \end{eqnarray*}
 
 \begin{prop}\label{qinvariance}
Let  $\wt m$ a locally finite positive measure on $\M \times \MM$. The following properties are equivalent:

the measure $\wt m$ satisfies, for all $f \in C^2_c (\M \times \MM)$,  $\int \D_x f d\wt m = 0 $,

the measure $\wt m$ is of the form $k_\xi (y)dy \otimes d\nu (\xi)$ where $\nu $ is a finite measure on $\MM$, $(x,\xi) \mapsto k_\xi (x) $ is measurable and for $\nu $ almost all $\xi$, $k_\xi (y)$ is a positive harmonic function on $\M$, 

the measure $\wt m $ is $\wt {q}$ invariant, i.e. for all $t >0 $, all $f \in C^2_c (\M \times \MM)$, $$ \int _{\M \times \MM} \left( \int _{\M} f(y, \xi) \wt{p} (t,x, y  ) dy \right) d\wt {m} (x,\xi) \; = \; \int _{\M \times \MM} f(x, \xi) d\wt  m (x, \xi), $$

the  measure  $\wt {Q}_{\wt m } : =  \int _{\M \times \MM} \wt{Q}_{x, \xi} d\wt m(x, \xi )$  on $C_{\MM} (\R_+, \M \times \MM)$ is $\s $-invariant.

\end{prop}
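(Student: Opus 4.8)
The plan is to observe that conditions (1)--(4) are the exact analogues, now fibred over the second coordinate $\MM$, of the four conditions of Proposition~\ref{invariance}, and to obtain the equivalences by disintegrating $\wt m$ along the projection $p_2 : \M \x \MM \to \MM$. First I would fix a bounded continuous $\chi : \M \to (0,\infty)$ with $\int_\M \chi < \infty$ and set $\nu_0 := (p_2)_*(\chi\,\wt m)$, a finite measure on the compact space $\MM$; disintegrating gives $\chi(x)\,\wt m = \int_\MM \mu_\xi\,d\nu_0(\xi)$ with $\mu_\xi$ a probability measure on $\M \x \{\xi\}$, and after rescaling by $\chi^{-1}$ one obtains measures $\wt m_\xi$ on $\M$ with $\wt m = \int_\MM \wt m_\xi\,d\nu_0(\xi)$, locally finite for $\nu_0$-almost every $\xi$ because $\wt m$ is locally finite and $\nu_0$ is finite. (This tilt by $\chi$ is needed because $(p_2)_*\wt m$ itself may fail to be finite, or even $\s$-finite, on $\MM$.)

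For $(1) \Rightarrow (2)$ I would test (1) against product functions $f(x,\xi) = \phi(x)\psi(\xi)$ with $\phi \in C^2_c(\M)$ and $\psi \in C(\MM)$, obtaining $\int_\MM \psi(\xi)\big(\int_\M \D\phi\,d\wt m_\xi\big)\,d\nu_0(\xi) = 0$; letting $\psi$ range over a countable dense family and $\phi$ over a countable $C^2_c$-dense family yields a $\nu_0$-conull set of $\xi$ on which $\int_\M \D\phi\,d\wt m_\xi = 0$ for every $\phi$, so that the equivalence (1)$\Leftrightarrow$(2) of Proposition~\ref{invariance} gives $\wt m_\xi = h_\xi(y)\,dy$ with $h_\xi$ a nonnegative harmonic function. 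Fix a base point $x_0$. Properness of $p_1 : \M \x \MM \to \M$ makes $(p_1)_*\wt m$ locally finite, and testing (1) against functions independent of $\xi$ shows $(p_1)_*\wt m$ is $\D$-invariant, hence of the form $H(y)\,dy$ with $H = \int_\MM h_\xi\,d\nu_0(\xi)$ nonnegative harmonic, in particular continuous, so that $\int_\MM h_\xi(x_0)\,d\nu_0(\xi) = H(x_0) < \infty$. Discarding the $\nu_0$-null set where $h_\xi \equiv 0$ (on its complement $h_\xi(x_0) > 0$ by the Harnack inequality), I would set $d\nu := h_\xi(x_0)\,d\nu_0$ and $k_\xi(x) := h_\xi(x)/h_\xi(x_0)$: then $\nu$ is finite, each $k_\xi$ is positive harmonic with $k_\xi(x_0) = 1$, and $\wt m = k_\xi(y)\,dy \otimes d\nu(\xi)$. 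Joint measurability of $(x,\xi) \mapsto k_\xi(x)$ follows from measurability of the disintegration together with the heat-kernel mean value property $h_\xi(y) = \int_\M h_\xi(z)\,\wt p(s,y,z)\,dz$, which exhibits $\xi \mapsto h_\xi(y)$ as a monotone limit of measurable functions.

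The remaining implications should be a routine transcription of Proposition~\ref{invariance} fibre by fibre. For $(2)\Rightarrow(3)$ and $(2)\Rightarrow(1)$ I would integrate over $\nu$, using Fubini, the fibrewise identities provided by the equivalences (2)$\Leftrightarrow$(3) and (2)$\Leftrightarrow$(1) of Proposition~\ref{invariance} applied to $k_\xi(y)\,dy$. For $(3)\Rightarrow(4)$ I would verify, exactly as in Proposition~\ref{invariance}, that on the cylinder functions $\gamma \mapsto f_0(\gamma(0))f_1(\gamma(s_1))\cdots f_n(\gamma(s_n))$ the explicit expression for $\wt Q_{\wt m}$, combined with the $\wt q$-invariance of $\wt m$ (which propagates to every one-dimensional marginal), yields $\s_t$-invariance; these functions generate the Borel $\s$-algebra of $C_{\MM}(\R_+, \M \x \MM)$. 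For $(4)\Rightarrow(3)$ I would use that the time-$t$ marginal of the $\s$-invariant measure $\wt Q_{\wt m}$ coincides both with the image of $\wt m$ under the time-$t$ heat flow and with $\wt m$ itself. Finally, for $(3)\Rightarrow(1)$, given $f \in C^2_c(\M \x \MM)$ I would write $\frac{1}{t}\int_{\M\x\MM}\big(\int_\M f(y,\xi)\wt p(t,x,y)\,dy - f(x,\xi)\big)\,d\wt m(x,\xi) = 0$ and let $t \to 0^+$, using the uniform convergence of the integrand to $\D_x f$ on the compact support of $f$ (parabolic regularity for the uniformly elliptic operator $\D$) together with dominated convergence.

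The step I expect to be the main obstacle is $(1)\Rightarrow(2)$: carrying out the disintegration over $\MM$ in a genuinely measurable way, checking that the fibre measures are locally finite and (after the argument above) have the asserted harmonic-density form with a common normalization at $x_0$, and establishing the finiteness of $\nu$. The Harnack inequality and the properness of $p_1$ are precisely the ingredients that make the normalization and the finiteness of $\nu$ work; everything else reduces mechanically to Proposition~\ref{invariance}.
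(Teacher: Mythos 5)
Your central step is exactly the paper's: since $\wt m$ is locally finite, tilt it by a positive continuous function into a finite measure, disintegrate over $\MM$, and apply Proposition \ref{invariance} to the (rescaled) fibre measures; the paper's whole proof is this reduction stated in three lines, applied to whichever of the properties (1), (3), (4) is assumed. Your extra work on the normalization $k_\xi(x_0)=1$ and on the finiteness of $\nu$ via the harmonic marginal $(p_1)_*\wt m$ is correct but not needed for the statement: in the paper's version $\nu$ is the base measure of the disintegration of the finite measure $b\,\wt m$, hence finite by construction, and the normalization is only imposed afterwards in the text. One slip in this part: you require $\int_\M \chi\,dx<\infty$, i.e.\ integrability of the tilt against the Riemannian volume. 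That does not make $\chi\,\wt m$ finite (a locally finite $\wt m$ can give mass growing arbitrarily fast to $B(x_0,R)\x\MM$), so $\nu_0=(p_2)_*(\chi\,\wt m)$ need not be finite as claimed. The tilt must be chosen relative to $\wt m$, as the paper does with $b$: take a compact exhaustion $K_n\x\MM$ and make $\chi$ small enough on $K_{n+1}\setminus K_n$ that $\int \chi\, d\wt m<\infty$. This is a one-line fix.

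The more substantive gap is your direct proof of (3)$\Rightarrow$(1). Uniform convergence of $\frac{1}{t}\bigl(\int f(y,\xi)\wt p(t,x,y)\,dy - f(x,\xi)\bigr)$ to $\D_x f$ holds on compacta, but for $x$ off the support of $f$ the integrand equals $\frac{1}{t}\int f(y,\xi)\wt p(t,x,y)\,dy$, which is not zero, and ``dominated convergence'' has no a priori dominating function in $L^1(\wt m)$: Gaussian upper bounds give a majorant of order $e^{-c\,d(x,\,\mathrm{supp}\,f)^2}$, but $\wt m$ is only locally finite and may not integrate it. To close this you would first have to extract a sub-Gaussian growth bound on $\wt m$ from hypothesis (3) itself (apply (3) at a fixed positive time to functions approximating the indicator of a compact set and invoke heat-kernel lower bounds, available since the Ricci curvature is bounded below), or, simpler and in the spirit of the paper, drop this step entirely and send (3) and (4) back to (2) by the same tilt-and-disintegrate reduction, applying the corresponding equivalences of Proposition \ref{invariance} fibre by fibre; note that descending (3) to the fibres also requires a countable test-family argument of the same kind you use for (1)$\Rightarrow$(2), since fibrewise the invariance identity is an equality of quantities of both signs, not an inequality.
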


\begin{proof}. It is clear that a measure of the form $k_\xi (y)dy \otimes d\nu (\xi)$ satisfies the other properties. Conversely, since $\wt {m}$ is locally finite, we can find a positive continuous function $b$ on $\M \times \MM$ such that $b \wt {m}$ is a finite measure. Write $b \wt{m}$ as $\int _{\MM} \wt {m}_{\xi} d\nu (\xi)$ for a finite measure $\nu $ on $\MM$ and a measurable family $ \xi \mapsto \wt {m}_{\xi }$ of probabilities on $\M$. Then, for $\nu $ almost every $\xi$, Proposition \ref {invariance} applies to the measure $b^{-1} \wt {m}_\xi $.
\end{proof}

\

In this paper, we normalize $\nu $ and the $k_\xi$s by choosing $k_\xi (x_0 ) = 1$.

Finally, the family of measures $\wt{Q}_{x,\xi}$ is $G$ equivariant, and defines a family $Q_w, w \in X_M$ of measures on $C(\R_+, \W_M)$. The family $Q_w$ describes the laminated Brownian motion. By construction,  all trajectories of the laminated Brownian motion remain on the leaf of the initial point $w(0)$. In the identification of $X_M $ with $M_0 \x \MM$, the Markov transition probabilities $q(t, (x,\xi), d(y,\eta))$ of the diffusion   with law $Q_{x,\xi}$ are
 given by:
\begin{equation*}
q(t,(x,\xi),d(y,\eta)) \; = \; \sum _{g\in G} \wt {q}(t, (\wt x,\xi), g_\star d(\wt y, \eta)) \; = \; \sum _{g\in G} \wt{p}(t,\wt x, g\wt y) d\wt y\delta _{g^{-1} \xi } (\eta),
\end{equation*}
where $\wt {x} ,  \wt {y} $ are lifts in $\M $ of the points $x,y$ in $M$.
\begin{prop}\label{harmonic} There is a one-to-one correspondence between:
 \begin{enumerate}
\item harmonic probability measures $m$ on $X_M$,
\item $G$-invariant measures $\wt m$ which satisfy the equivalent conditions of Proposition \ref{qinvariance} and such that $\wt m (M_0 \x \MM) = 1,$
 \item probability measures $m$ on $X_M$ which projects on $M$ onto the Lebesgue probability measure  and which can be written in local $\Phi ( \DD^d\times T )$ charts $m =  \int _T \left( \int k_t(x) dx\right) d\nu (t)$, where the function $(x,t) \mapsto k_t(x) $ is measurable and, for $\nu $ almost all $t \in T$, $k_t(x)$ is a positive harmonic function,
 \item probability measures $m$ on $X_M$ which can be written in a $M_0 \times \MM$ representation $$m = \int _{M_0} \left (\int _{\MM} d\mu_x (\xi) \right)  d{\textrm{Leb}} (x) ,$$ where $x \mapsto \mu_x $ is a measurable family of measures on $\MM $ with the same negligible sets and such that for almost every $\xi \in \MM$, $k_\xi (x) := \frac{d\mu_x}{d\mu_{x_0} }(\xi) $ is obtained  as the restriction to a fundamental domain of a positive harmonic function on $\M$, 
 \item probability measures $m$ on $X_M$ which are invariant under $q$: for all $f \in C^2(X_M)$, all $t >0$, we have
  $$ \int _{X_M} \left( \int _{X_M} f(y,\eta) q(t,(x,\xi), d(y,\eta) \right) dm ((x, \xi)) \; = \; \int_{X_M} f dm. $$
\item probability measures $m$ on $X_M$ such that $Q_m := \int _{X_M} Q_w dm(w) $ is a $\s$-invariant measure on $C(\R_+, \W_M)$.
\end{enumerate}
\end{prop}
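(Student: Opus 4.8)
The plan is to reduce all six conditions to Proposition \ref{qinvariance} by systematically passing to the $G$-invariant lift. Given a probability $m$ on $X_M$, let $\wt m$ be the $G$-invariant Borel measure on $\M \times \MM$ whose restriction to the tile $M_0 \times \MM$ pushes to $m$ under the quotient map $p : \M \times \MM \to X_M$. Since the $G$-translates of $M_0 \times \MM$ tile $\M \times \MM$, each has $\wt m$-mass $\wt m(M_0 \times \MM) = 1$, and every compact set meets only finitely many of them, $\wt m$ is locally finite; conversely every locally finite $G$-invariant $\wt m$ with $\wt m(M_0 \times \MM) = 1$ arises this way. The pivot is the assertion that $m$ is harmonic if and only if $\wt m$ satisfies the equivalent conditions of Proposition \ref{qinvariance}, i.e.\ $\wt m = k_\xi(x)\,(dx \times \nu(d\xi))$ with $k_\xi$ positive harmonic and $k_\xi(x_0) = 1$; this is precisely item (2). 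Each of (3)--(6) is then this description re-expressed, so it suffices to prove (1)$\Leftrightarrow$(2) and to translate (2) into (3), (4), (5), (6).

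For (1)$\Leftrightarrow$(2): since $\M \to M$ is Riemannian and the leaves of $\W_M$ are locally modeled on $\M$, the operator $\D^\W$ on $X_M$ lifts to the first-variable Laplacian $\D_x$ on $\M \times \MM$, and $\D^\W$ commutes with $p_\star$ on functions supported in a chart on which $p$ is injective. Using a partition of unity $\{\rho_i\}$ on $X_M$ subordinate to foliated charts, one has $\D^\W f = \sum_i \D^\W(f\rho_i)$ (because $\sum \rho_i \equiv 1$ kills the first-order and zeroth-order terms of the product rule), and each $\D^\W(f\rho_i)$ lifts to a function in $C^2_c(\M \times \MM)$; conversely, any $\wt f \in C^2_c(\M \times \MM)$ decomposes, via a function $\chi \in C^\infty_c$ with $\sum_{g\in G} \chi \circ g^{-1} \equiv 1$ and a further refinement putting each piece in the interior of a single tile, into finitely many pieces that push down to leafwise-$C^2$ functions on $X_M$. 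Hence ``$\int_{X_M} \D^\W f\, dm = 0$ for all leafwise-$C^2$ $f$'' is equivalent to ``$\int_{\M\times\MM} \D_x \wt f\, d\wt m = 0$ for all $\wt f \in C^2_c$'', and Proposition \ref{qinvariance} gives (2). (The implication (1)$\Rightarrow$(2) is also exactly the result of \cite{Ga} recalled in Section 1.) Testing harmonicity against $f = \phi \circ \pi$, $\phi \in C^2(M)$, gives $\int_M \D\phi\, d(\pi_\star m) = 0$, hence $\pi_\star m = \textrm{Leb}$ by the uniqueness noted after Proposition \ref{invariance}; this is the projection normalization appearing in (3) and (4).

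The remaining equivalences re-express $\wt m = k_\xi(x)(dx \times \nu(d\xi))$. Disintegrating it over the local product charts of $\W_M$ yields (3), the leafwise Lebesgue density $k_t$ being harmonic on the plaque; disintegrating over the $M_0 \times \MM$ representation yields (4) with $\mu_x = k_\xi(x)\,\nu(d\xi)$ (up to the constant $\textrm{vol}\, M$ relating $dx|_{M_0}$ to $d\textrm{Leb}$), where $d\mu_x/d\mu_{x_0}(\xi) = k_\xi(x)$ because $k_\xi(x_0) = 1$, and where the Harnack inequality --- strict positivity and local two-sided bounds for harmonic functions --- makes the $\mu_x$ mutually equivalent with common null sets (those of $\nu$); the converses reconstruct $\wt m$ from the disintegration data and invoke Proposition \ref{qinvariance}(2), the measurable selection of $\nu$ and of $(x,\xi)\mapsto k_\xi(x)$ being obtained as in its proof. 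For (5): the kernel identity $q(t,(x,\xi),d(y,\eta)) = \sum_{g\in G}\wt q(t,(\wt x,\xi),g_\star d(\wt y,\eta))$ together with $G$-invariance of $\wt m$ converts the $q$-invariance integral over $X_M$ into the $\wt q$-invariance integral over $\M$, so (5) is equivalent to Proposition \ref{qinvariance}(3), hence to (2). For (6): the quotient map $C_\MM(\R_+,\M\times\MM)\to C(\R_+,\W_M)$, restricted to the tile over $M_0$, carries $\wt Q_{\wt m}$ to $Q_m$ and commutes with $\s$, so $\s$-invariance of $Q_m$ is equivalent to that of $\wt Q_{\wt m}$, i.e.\ to Proposition \ref{qinvariance}(4); equivalently one may quote the general correspondence between invariance under a Markov kernel and shift-invariance of the associated path measure, as in Proposition \ref{invariance} and \cite{P}.

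Conceptually there is no deep obstacle here --- the statement is a compendium of reformulations all anchored at Proposition \ref{qinvariance} --- but the step requiring the most care is the localization in (1)$\Leftrightarrow$(2): harmonicity on $X_M$ is tested against functions smooth only along leaves and not compactly supported inside a (generally noncompact) leaf $W_\xi = G_\xi\backslash\M$, whereas Proposition \ref{qinvariance} uses compactly supported test functions on $\M\times\MM$. Matching the two demands a partition of unity subordinate simultaneously to the plaque structure of $\W_M$ and to the $G$-tiling, together with the check that each local piece lifts to, and pushes down from, a genuinely compactly supported function (so that no boundary contribution appears at the tile walls or at infinity along leaves --- which is legitimate since $m$, hence $\wt m$ on each tile, is finite). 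Once this bridge and the measurable-selection bookkeeping are in place, the two disintegrations and the two Markov/path-space translations are routine unwinding of the $G$-quotient.
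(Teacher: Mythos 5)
Your proposal is correct and takes essentially the same route as the paper: lift $m$ to the $G$-invariant measure $\wt m$, invoke Proposition \ref{qinvariance} to get the form $k_\xi(x)\,dx\otimes d\nu(\xi)$, localize with a partition of unity for (1)$\Leftrightarrow$(2)$\Leftrightarrow$(3), rewrite the disintegration for (4), and fold/unfold the $G$-action for the Markov and path-space formulations. The only cosmetic difference is that the paper disposes of (5) and (6) by citing the general theory of diffusions with a finite invariant measure, while you also reduce them explicitly to conditions (3) and (4) of Proposition \ref{qinvariance}; both are legitimate.
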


\begin{proof} Let $m$ be a harmonic probability measure on $X_M$. By writing the harmonic equation for functions which are constant on the fibers, we see that the projection of $m$ onto $M$ is a harmonic probability measure and thus is ${\textrm {Leb}} $. Write $\wt m$ for the unique $G$-invariant measure on $\M \times \MM$ such that the restriction to any fundamental domain projects to $m$. The measure $\wt {m} $ satisfies  for all $f \in C^2_c (\M \times \MM)$,  $\int \D_x f d\wt m = 0 $. Conversely, the restriction of such a measure to $(M_0 \x \MM)$ is finite and harmonic. This shows the equivalence of properties (1) and (2).

Moreover, by proposition \ref{qinvariance}, the measure $\wt m$ is of the form $k_\xi (y)dy \otimes d\nu (\xi)$ where $\nu $ is a finite measure on $\MM$, $(x,\xi) \mapsto k_\xi (x) $ is measurable and for $\nu $ almost all $\xi$, $k_\xi (y)$ is a positive harmonic function on $\M$. When we restrict to the image of a $\DD^d \times T$ chart, this gives the description of property (3). Conversely, assume that $m$ satisfies property (3). Using if necessary a partition of unity we may take the function $f$  in $C^2 (X_M) $ with support inside the image of a $\Phi ( \DD^d\times T )$ chart. Then:
$$ \int \D^\W f dm = \int _T \left( \int \D_x f(x,t) k_t(x) dx\right) d\nu (t)$$
and the inner integral vanishes for all $t \in T$ such that $x \mapsto k_t(x)$ is a harmonic function, that is, for almost every $t$.

Assume $m$ satisfies property (3).  Putting together the $\DD^d \times T$ charts into a measurable $M_0 \times \MM $ representation, we have a measure which projects on a measure $\nu $ on $\MM$ and such that the conditional on $M$ are proportional to $k_\xi (x) d{\textrm {vol}}(x)$. In other words, the measure $m$ writes as $\displaystyle m = \frac {k_\xi (x)}{\int _{M_0} k_\xi (x) dx} dx \otimes d\nu.$ Since we assume that the projection onto $M_0$ is ${\textrm {Leb}} $, we can write $m = \int _{M_0} \left (\int _{\MM} d\mu_x (\xi) \right)  d{\textrm{Leb}} (x) ,$ where $$ \mu_x (d\xi) = \frac {k_\xi (x) {\textrm {vol}}M} {\int _{M_0} k_\xi (x)dx}  d\nu (\xi).$$ We indeed have $ \frac{d\mu_x}{d\mu_y }(\xi) = \frac {k_\xi (x)}{k_\xi (y)}. $ This shows that property (3) implies property (4). The converse is proven analogously,  by  setting $\nu = \mu_{x_0}$. 

Properties (5) and (6) are equivalent to (1) by general theory of diffusions with a finite invariant measure. The point to check is that a $\s$-invariant measurable set  $B$ in $C(\R_+,\W _M)$ is of the form $x \in B_0$, where $B_0$ is a $Q$-invariant subset of $X_M$. It follows from  the Markov property that  the set $B$ has $Q_w$ measure 0 or 1 for $m$-almost every $w \in X_M$. Take $B_0 = \{ w : Q_w (B) = 1\}$. 
\end{proof}

Proposition \ref {harmonic} is due to Garnett (\cite {Ga}). We included a proof in the suspension case for notational purposes. A harmonic measure is called ergodic harmonic if it cannot be decomposed into a convex combination of other harmonic measures. By proposition \ref{harmonic}, an ergodic harmonic measure is also extremal for properties (5) and (6) and therefore the time shift $\s _t $ is ergodic on $(C(\R_+,\W_M), Q_m)$. By proposition \ref{harmonic} a harmonic measure can be written, in a  $M_0 \x \MM $ representation  as 
$$\int _M d\mu_x(\xi) d{\textrm {Leb}}(x) \quad {\textrm {where }} \frac{d\mu_x}{d\mu_y }(\xi) \; = \; \frac{k_\xi (x)}{k_\xi (y)}$$ 
and $k_\xi (x) $ is a positive harmonic function for $\nu ( = \mu _{x_0} )$-almost every $\xi$. In particular, for $f \in C^2 (X_M)$ with support in the interior of $M_0$, we may write:
\begin{eqnarray*}
\int f dm \; &=& \; \int _{M_0} \int _{\MM} f(x,\xi) d\mu_x (\xi) d{\textrm {Leb}}(x) \\ &=& \int _{M_0}\left( \int _{\MM} f(x,\xi) k_\xi (x) d{\textrm {Leb}}(x) \right)  d\nu (\xi)
\end{eqnarray*}
Integrating by parts the inner integral, the following formulas follow, for all $f,g \in C^2(X_M)$:
\begin{eqnarray*} \int \D^\W f dm \; &=& \; - \int \< \nabla ^\W f, \nabla ^\W \ln k_\xi \> dm = 0 \\
\int g \D^\W f dm \; &=& \; - \int \< \nabla ^\W f, \nabla ^\W g \> dm - \int g \< \nabla ^\W f, \nabla ^\W \ln k_\xi \> dm, 
\end{eqnarray*}
where $\nabla ^\W  g$ denotes the gradient of the function $g$ along the leaves of the lamination $\W$ and $\<,\>$ the leafwise scalar product. The second formula extends by approximation to vector fields $Y$ which are $C^1 $ along the leaves and such that $Y$ and ${\textrm {div}} ^\W Y $ are continuous:
\begin{equation}\label{Green}
\int {\textrm {div}} ^\W Y dm \; = \; - \int \< Y, \nabla ^\W \ln k_\xi \> dm.
\end{equation}

\

\section{Asymptotics of harmonic measures}

In this section, we state two formulas as Proposition \ref{Vadim1} and \ref{Vadim2}. We deduce from them Proposition \ref{basic} and, using Propositions \ref{Furst.} and \ref{entropy}, Theorem A. 

Let $m$ be an ergodic harmonic measure on $X_M$. Recall that $m$ can be written as $\int _{M_0} k_\xi (x) d\nu(\xi ) d{\textrm {Leb}}(x)$ for some positive harmonic function $k_\xi  (x)$ defined for $\nu $-almost every $\xi$. The probability measure $Q_m$ is invariant and ergodic under the shift on the space of trajectories $C(\R_+, \W_M)$. There are two natural additive functional on $C(\R_+, \W_M)$ which are defined as $G$-invariant functionals on $C_{\MM}(\R_+, \M \x \MM)$: the horospherical displacement
$$ L(t,\om, \xi) : = \xi (\om(t) ) - \xi (\om (0))$$
and the harmonic kernel
$$ K(t, \om, \xi) := \ln \frac {k_\xi (\om (0))}{\k_\xi (\om (t))}.$$
The functional $K(t, \om, \xi  )$ is defined for $Q_m$-almost every $(\om, \xi )$, but  for all $t \geq 0$. We have $L(t+s, \om ,\xi) = L(t,\om , \xi) + L(s, \s_t (\om, \xi)) $ and, for $Q_m$-almost every $(\om, \xi)$, $K(t+s, \om ,\xi) = K(t,\om , \xi) + K(s, \s_t (\om, \xi)) .$

By the ergodic theorem, the two following limits exist $Q_m$-almost everywhere and are constant $Q_m$-almost everywhere:
$$ \ell (m) \; : = \; \lim_{T\to \infty} \frac{1}{T} L(T,\om, \xi) \;\; {\textrm {and }} \; \;  k (m) \; : = \; \lim_{T\to \infty} \frac{1}{T} K(T,\om, \xi) .$$
By our description of  the measure $Q_m$ in Section 2, for $\nu$-almost every $\xi$, the numbers $\ell (m)$ and $k(m)$ can also be seen as the limits along almost every trajectory  of the  Brownian motion of respectively $\frac{1}{t} \xi (\wt X_t)$ and $- \frac{1}{t} \ln k_\xi (\wt X_t)$. This is the way  they were introduced in Section 1. In particular, since the functions $\xi $ are Lipschitz, for all ergodic harmonic measure $m$, 
\begin{equation}
\ell (m) \; \leq \; \ell.
\end{equation}
The analogous result $k(m) \leq h$ is Proposition \ref{entropy}. Kaimanovich  introduced in \cite{K2} the {\it {reverse entropy}} of an ergodic harmonic measure as the number $h'(m) $ such that, for $Q_m$-almost every trajectory in $C(\R_+, \M)$, 
$$ h'(m) \; = \; \lim_{t \to \infty} - \frac{1}{t} \ln \left( \wt p (t, \om (0), \om(t) \frac {k_\xi (\om(0))}{k_\xi (\om(t))} \right).$$
Clearly, $h'(m) = h - k(m)$. Proposition \ref{entropy} follows from the observation that the number $h'(m)$ is nonnegative, since it can be seen as the entropy of a conditional process (see \cite {K1}, section 4)).

Observe that, by $\s$-invariance and ergodicity, for all $\tau >0$, we have:
$$ \ell (m) \; =\; \frac{1}{\tau} \int \big(L(\tau, \om , \xi)\big) dQ_m \quad \quad  k (m) \; =\; \frac{1}{\tau} \int \big(K(\tau, \om , \xi)\big) dQ_m. $$
For a non-ergodic harmonic measure, we {\it {define }} $\ell (m) $ and $k(m) $ by these formulas. We have:
\begin{prop} \label{Vadim1}Let $m$ be a harmonic measure. Then:
$$ k(m) \;= \; \int_{X_M} \| \nabla ^\W \ln k_\xi \|^2 dm .$$
\end{prop}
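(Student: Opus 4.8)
The plan is to identify $k(m)$, which is defined as the almost-sure (equivalently, the time-averaged) rate of growth of the additive functional $K(t,\om,\xi) = \ln\frac{k_\xi(\om(0))}{k_\xi(\om(t))}$, with the Dirichlet-type energy $\int_{X_M}\|\nabla^\W\ln k_\xi\|^2\,dm$. The natural tool is the It\^o/Dynkin formula applied to the function $u_\xi := \ln k_\xi$ along leaves of the lamination. First I would fix a harmonic measure $m$, written in an $M_0\x\MM$ representation as $\int_{M_0}k_\xi(x)\,d\nu(\xi)\,d{\textrm{Leb}}(x)$, and work with the $G$-invariant lift, so that everything reduces to the leafwise Brownian motion $\wt X_t$ with generator $\D$ on $\M$ and the positive harmonic functions $k_\xi$. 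Since $k_\xi$ is harmonic, $\D k_\xi = 0$, and the standard computation gives
$$\D\ln k_\xi \;=\; \frac{\D k_\xi}{k_\xi} - \frac{\|\nabla k_\xi\|^2}{k_\xi^2} \;=\; -\,\|\nabla^\W\ln k_\xi\|^2.$$
So $\ln k_\xi$ is a superharmonic function whose Laplacian is exactly $-\|\nabla^\W\ln k_\xi\|^2$; this is the algebraic heart of the matter.

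Next I would apply It\^o's formula to $t\mapsto \ln k_\xi(\wt X_t)$ along a leaf. Formally,
$$\ln k_\xi(\wt X_t) - \ln k_\xi(\wt X_0) \;=\; M_t \;+\; \int_0^t \D\ln k_\xi(\wt X_s)\,ds \;=\; M_t \;-\; \int_0^t \|\nabla^\W\ln k_\xi\|^2(\wt X_s)\,ds,$$
where $M_t$ is a local martingale (the stochastic integral of $\nabla^\W\ln k_\xi$ against the Brownian motion). Hence $-K(t,\om,\xi) = \ln k_\xi(\wt X_t) - \ln k_\xi(\wt X_0) = M_t - \int_0^t \|\nabla^\W\ln k_\xi\|^2(\wt X_s)\,ds$, i.e. $K(t,\om,\xi) = \int_0^t \|\nabla^\W\ln k_\xi\|^2(\wt X_s)\,ds - M_t$. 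Now I divide by $t$ and let $t\to\infty$: by the ergodic theorem applied to $Q_m$ (as already invoked in the excerpt for the existence of $\ell(m), k(m)$), the time average of the additive functional $s\mapsto \|\nabla^\W\ln k_\xi\|^2(\wt X_s)$ converges $Q_m$-a.e. to $\int_{X_M}\|\nabla^\W\ln k_\xi\|^2\,dm$, while the martingale term $M_t/t$ should vanish in the limit. Comparing with $k(m) = \lim_t \frac1t K(t,\om,\xi)$ yields the claimed formula. Alternatively, and perhaps more cleanly given the low regularity, I would integrate the infinitesimal version directly: using the Green-type identity \eqref{Green} with $Y = \nabla^\W\ln k_\xi$ (noting ${\textrm{div}}^\W\nabla^\W\ln k_\xi = \D^\W\ln k_\xi = -\|\nabla^\W\ln k_\xi\|^2$) gives $-\int\|\nabla^\W\ln k_\xi\|^2\,dm = -\int\|\nabla^\W\ln k_\xi\|^2\,dm$, a tautology, so instead I would use the identity $k(m) = \frac1\tau\int K(\tau,\om,\xi)\,dQ_m$ together with the generator identity $\frac{d}{d\tau}\big|_{\tau=0}\E_w[-\ln k_\xi(\wt X_\tau) + \ln k_\xi(\wt X_0)] = -\D^\W\ln k_\xi(w) = \|\nabla^\W\ln k_\xi\|^2(w)$ and $q$-invariance of $m$ to get $k(m) = \int \|\nabla^\W\ln k_\xi\|^2\,dm$ directly.

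The main obstacle is regularity and integrability. The horofunctions $\xi$ are only Lipschitz, but that does not affect this proposition — here we only use the harmonic functions $k_\xi$, which are smooth along leaves by elliptic regularity, so It\^o's formula applies leafwise without trouble. The genuine issues are: (i) justifying that $M_t/t \to 0$ in the $Q_m$-a.e. sense, which requires controlling the quadratic variation $\langle M\rangle_t = \int_0^t\|\nabla^\W\ln k_\xi\|^2(\wt X_s)\,ds$ and applying a law of large numbers for martingales — this is fine once one knows $\int\|\nabla^\W\ln k_\xi\|^2\,dm < \infty$; and (ii) showing this integral is indeed finite, i.e. that $m$-integrability of $\|\nabla^\W\ln k_\xi\|^2$ holds. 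For the latter I would note that $k(m)$ is already known to be finite (it is bounded above by $h < \infty$ by Proposition \ref{entropy} and compactness of $M$), and run the argument in the direction that finiteness of $k(m)$ forces finiteness of the energy integral — e.g. by a truncation argument, integrating the It\^o formula up to time $\tau$ and passing to the limit with Fatou. This circular-looking point is the one requiring care; everything else is the standard Dynkin-formula-plus-ergodic-theorem computation.
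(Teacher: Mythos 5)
Your proposal is correct in substance, and your one-sentence ``alternative'' route is in fact the paper's proof: the paper starts from $\tau k(m)=\int K(\tau,\om,\xi)\,dQ_m$, writes the inner expectation with the heat kernel, uses $\frac{\partial}{\partial s}\wt p=\D_y\wt p$ and an integration by parts together with the identity $\D\ln k_\xi=-\|\nabla\ln k_\xi\|^2$, so that the expectation of $K(\tau,\cdot)$ becomes $\int_0^\tau$ of the heat semigroup applied to the $G$-invariant function $\vf(y,\xi)=\|\nabla_y\ln k_\xi(y)\|^2$, and then invokes the $q$-invariance of $m$ (Proposition \ref{harmonic}) to see that each time slice contributes $\int_{X_M}\|\nabla^\W\ln k_\xi\|^2\,dm$. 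This is precisely your ``generator identity plus $q$-invariance'' argument in integrated form, and it has two advantages over your primary It\^o route: no martingale term ever appears, so no law of large numbers for martingales is needed; and it applies verbatim to non-ergodic harmonic measures, for which $k(m)$ is \emph{defined} by $k(m)=\frac1\tau\int K(\tau,\cdot)\,dQ_m$ rather than as an almost-sure limit --- note that your Birkhoff-theorem argument only covers ergodic $m$, while the proposition is stated for all harmonic measures. Your It\^o route is viable for ergodic $m$, but the two technical points you flag admit a cleaner resolution than the one you sketch: the integrability of $\|\nabla^\W\ln k_\xi\|^2$ should not be extracted from $k(m)\le h$ via Proposition \ref{entropy} (which concerns ergodic measures and sits elsewhere in the logical scheme); it is immediate from the Cheng--Yau gradient estimate for positive harmonic functions, which on a cover of a compact manifold (Ricci curvature bounded below) gives a uniform bound $\|\nabla\ln k_\xi\|\le C$ independent of $\xi$. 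With this bound the energy integrand is bounded, $M_t/t\to 0$ follows at once from the quadratic variation bound $\langle M\rangle_t\le C^2 t$, and the integrations by parts in the paper's computation (together with the Gaussian heat kernel estimates of \cite{CLY}) are likewise routine, so neither Fatou nor truncation is needed.
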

\begin{prop}\label{Vadim2} Let $m$ be a harmonic measure. Then:
$$ \ell(m) \;= \; -\int_{X_M} \< Z_\xi, \nabla ^\W \ln k_\xi \> dm ,$$
where the vector field $Z_\xi$ is defined $m$-almost everywhere by $Z_\xi := \nabla ^\W \xi.$
\end{prop}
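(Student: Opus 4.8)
The plan is to run Kaimanovich's computation from the $C^{2}$ case and then remove the regularity hypothesis on the horofunctions by a heat–kernel regularization. Assume first that $\xi$ is smooth along the leaves. Fix $\tau>0$; by the $\s$-invariance formula recalled above, $\ell(m)=\frac1\tau\int L(\tau,\om,\xi)\,dQ_m$ with $L(\tau,\om,\xi)=\xi(\om(\tau))-\xi(\om(0))$. Dynkin's formula for the laminated diffusion (generator $\D^\W$) gives, leafwise, $P^\W_\tau\xi-\xi=\int_0^\tau P^\W_s(\D^\W\xi)\,ds$, where $P^\W_s$ is the leafwise heat semigroup, and both $P^\W_\tau\xi-\xi$ and $\D^\W\xi$ descend to $X_M$. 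Integrating against $m$ and using that $m$ is $q$-invariant (Proposition~\ref{harmonic}(5)), so that $\int P^\W_s g\,dm=\int g\,dm$, we get $\frac1\tau\int L(\tau,\om,\xi)\,dQ_m=\int_{X_M}\D^\W\xi\,dm$. Since $\mathrm{div}^\W Z_\xi=\D^\W\xi$ for $Z_\xi=\nabla^\W\xi$, the Green identity (\ref{Green}) applied to $Y=Z_\xi$ yields $\int_{X_M}\D^\W\xi\,dm=-\int_{X_M}\langle Z_\xi,\nabla^\W\ln k_\xi\rangle\,dm$, which is the assertion.

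In general $\xi$ is only $1$-Lipschitz, $\D^\W\xi$ is merely a leafwise distribution, and (\ref{Green}) cannot be applied to $Z_\xi$ directly. To get around this, for $\e>0$ set $\xi^\e(x,\xi):=\int_\M \wt p(\e,x,y)\,\xi(y)\,dy$ on $\M\times\MM$, the leafwise heat regularization of $\xi$ at time $\e$. Since $\xi$ has at most linear growth and $\wt p$ has Gaussian upper bounds (the leaves have bounded geometry, being locally isometric to $\M$, which covers the compact $M$), $\xi^\e$ is well defined and smooth along the leaves; since $\wt p$ is $G$-invariant, $\xi^\e$ transforms under $G$ by an additive constant just as $\xi$ does, so $\nabla^\W\xi^\e$ and $\D^\W\xi^\e$ descend to continuous fields on the compact space $X_M$, with $\|\nabla^\W\xi^\e\|\le e^{C\e}$ by a Bochner inequality (using a lower Ricci bound on $\M$), and $L^\e(t,\om,\xi):=\xi^\e(\om(t))-\xi^\e(\om(0))$ descends to $C(\R_+,\W_M)$. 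Moreover $\|\xi^\e-\xi\|_\infty\le\sup_x\int\wt p(\e,x,y)\,d(x,y)\,dy\to0$ as $\e\to0$.

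Now the first paragraph applies verbatim to the genuinely smooth $\xi^\e$, with (\ref{Green}) legitimately applied to $Y=\nabla^\W\xi^\e$: for every $\tau>0$,
\[
\frac1\tau\int L^\e(\tau,\om,\xi)\,dQ_m=\int_{X_M}\D^\W\xi^\e\,dm=-\int_{X_M}\langle\nabla^\W\xi^\e,\nabla^\W\ln k_\xi\rangle\,dm .
\]
Let $\e\to0$. The left side tends to $\frac1\tau\int L(\tau,\om,\xi)\,dQ_m=\ell(m)$ because $\|\xi^\e-\xi\|_\infty\to0$. For the right side, work in the $M_0\times\MM$ representation, where $\int f\,dm=\int_\MM\bigl(\int_{M_0}f(x,\xi)\,k_\xi(x)\,dx\bigr)\,d\nu(\xi)$ and $\overline{M_0}$ is compact: for $\nu$-almost every $\xi$, $\xi^\e\to\xi$ locally uniformly on $\M$ while $\|\nabla^\W\xi^\e\|$ stays bounded, hence $\nabla^\W\xi^\e\rightharpoonup Z_\xi$ weakly in $L^2(M_0,dx)$, so $\int_{M_0}\langle\nabla^\W\xi^\e,\nabla^\W\ln k_\xi\rangle k_\xi\,dx=\int_{M_0}\langle\nabla^\W\xi^\e,\nabla k_\xi\rangle\,dx\to\int_{M_0}\langle Z_\xi,\nabla^\W\ln k_\xi\rangle k_\xi\,dx$; these inner integrals are bounded by $e^{C\e}\int_{M_0}|\nabla k_\xi|\,dx$, which is $\nu$-integrable since it equals $\int_{X_M}|\nabla^\W\ln k_\xi|\,dm<\infty$ (Proposition~\ref{Vadim1}), so dominated convergence in $\xi$ gives $\int_{X_M}\langle\nabla^\W\xi^\e,\nabla^\W\ln k_\xi\rangle\,dm\to\int_{X_M}\langle Z_\xi,\nabla^\W\ln k_\xi\rangle\,dm$. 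Combining, $\ell(m)=-\int_{X_M}\langle Z_\xi,\nabla^\W\ln k_\xi\rangle\,dm$.

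The step I expect to be the main obstacle is this last limit: one needs the uniform Lipschitz bound on $\xi^\e$, the uniform convergence $\|\xi^\e-\xi\|_\infty\to0$, and the weak $L^2$ convergence $\nabla^\W\xi^\e\rightharpoonup Z_\xi$, and these estimates must be uniform and measurable in the leaf $\xi$ so that they can be integrated against $\nu$. The compactness of $X_M$ — equivalently the bounded geometry of the leaves, a consequence of $\pi:\M\to M$ being a cover of a \emph{compact} manifold — is precisely what supplies this uniformity. One must also check the $G$-equivariance of $\xi\mapsto\xi^\e$ indicated above, which is what makes $\nabla^\W\xi^\e$ a genuine vector field on $X_M$ to which (\ref{Green}) may be applied.
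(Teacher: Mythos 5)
Your argument is essentially the paper's own proof: the same leafwise heat regularization $\vf_\e(x,\xi)=\int_{\M}\wt p(\e,x,y)\xi(y)\,dy$, the same use of the heat equation plus the invariance of $m$ to get $\ell(m)=-\int\<\nabla^\W\vf_\e,\nabla^\W\ln k_\xi\>dm$ via the Green identity (\ref{Green}), and the same passage to the limit $\e\to0$ using the uniform Lipschitz bound and weak(*) convergence of the gradients to $Z_\xi$ (the paper quotes Evans--Gariepy here, while you invoke the Bakry--\'Emery gradient estimate and dominated convergence in $\xi$ to make the bounds explicit). The only cosmetic difference is at the start: the paper integrates $L(\e+\tau)-L(\e)$, which equals $\tau\ell(m)$ exactly by $\s$-invariance, so no limit is needed on that side, whereas you let $\e\to0$ there as well; both are fine.
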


Recall that $\xi$ is defined as the uniform limit of difference of distances. It follows that $\xi$ is 1-Lipschitz and by Rademacher Theorem, $\nabla \xi $ is defined Lebesgue-almost everywhere on $\M$. Since $m$ is harmonic, its conditional on the leaves of $\W$ are absolutely continuous, and $Z_\xi := \nabla ^\W \xi $ is defined $m$-almost everywhere. Moreover, $\|Z_\xi \| \leq 1 $ $m$-almost everywhere. Schwarz inequality,  Propositions \ref{Vadim1} and  \ref{Vadim2} yield that, for any harmonic measure $m$,
\begin{eqnarray*} \ell ^2(m) \; = \; \left| \int  _{X_M} \< Z_\xi , \nabla ^\W \ln k_\xi \> dm \right|^2 \; &\leq & \; \int _{X_M} | \< Z_\xi , \nabla ^\W \ln k_\xi \> |^2 dm \\ &\leq & \; \int_{X_M} \| \nabla ^\W \ln k_\xi \|^2 dm \; = \; k(m), 
\end{eqnarray*}
with equality only if $\nabla ^\W \ln k_\xi  =  -\ell (m) Z_\xi $ $m$-almost everywhere. 
This proves Proposition \ref{basic}. We also have:

\begin{cor}\label{final}
Let $\M  $ be a regular Riemannian cover of a compact manifold; then,
\begin{equation*}
\ell ^2 \; \leq \; h.
\end{equation*}
If there is equality $\ell^2 = h $, then there is an ergodic harmonic measure $m$ on $X_M$ such that $ \ln k_\xi (x) = -\ell \xi (x) $ $m$-almost everywhere in the case $\ell >0$, $k_\xi (x) = 1 $  $m$-almost everywhere in the case $\ell = 0 $. 
\end{cor}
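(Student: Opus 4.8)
\textbf{Proof plan for Corollary \ref{final}.}
The plan is to assemble the inequality from the three propositions already available. Starting from an ergodic harmonic measure $m$ with $\ell(m) = \ell$ (such $m$ exists by Proposition \ref{Furst.}), Proposition \ref{basic} gives $\ell^2 = \ell^2(m) \leq k(m)$, and Proposition \ref{entropy} gives $k(m) \leq h$; combining these yields $\ell^2 \leq h$. So the substantive content of the corollary is the rigidity statement: describing what equality forces on the harmonic functions $k_\xi$. I would first observe that equality $\ell^2 = h$ propagates through the chain, forcing both $\ell^2(m) = k(m)$ and $k(m) = h$. The first equality, by the Schwarz-inequality discussion preceding the corollary (i.e.\ the equality case in Proposition \ref{basic}), gives $\nabla^\W \ln k_\xi = -\ell(m) Z_\xi = -\ell\, \nabla^\W \xi$ $m$-almost everywhere.

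The next step is to integrate this leafwise gradient identity. Since $\nabla^\W(\ln k_\xi + \ell \xi) = 0$ $m$-almost everywhere and the leaves of $\W$ are connected manifolds locally modeled on $\M$, on $m$-almost every leaf the function $\ln k_\xi + \ell \xi$ is constant. Using the normalizations $k_\xi(x_0) = 1$ and $\xi(x_0) = 0$, that constant is $0$, so $\ln k_\xi(x) = -\ell\, \xi(x)$ $m$-almost everywhere. When $\ell = 0$ this reads $k_\xi(x) = 1$ $m$-almost everywhere, which is the second alternative in the statement. Some care is needed here because $k_\xi$ and $\xi$ are honest functions on $\M$ (or on $W_\xi$) while the identity is only known $m$-a.e.; but since $m$ is harmonic its leafwise conditionals are absolutely continuous with smooth positive densities $k_\xi$, so the a.e.\ statement on the ambient space does pass to a genuine statement on a full-measure set of leaves, and continuity of $\ln k_\xi$ (harmonic functions are smooth) together with path-connectedness upgrades "zero gradient a.e.\ on the leaf" to "constant on the leaf."

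The main obstacle I anticipate is precisely this passage from the $m$-almost-everywhere gradient equation to a pointwise statement on leaves, because $\xi$ is only $1$-Lipschitz, not $C^1$, so $\nabla^\W \xi = Z_\xi$ exists only almost everywhere and one cannot naively integrate it along a path. The way around this is to use that $\xi$, being a Busemann-type limit of distance functions, is a viscosity solution / has controlled structure, or more simply: the identity $\nabla^\W \ln k_\xi = -\ell \nabla^\W \xi$ holds a.e.\ and $\ln k_\xi$ is smooth, so $\nabla^\W \xi$ agrees a.e.\ with the smooth field $-\frac{1}{\ell}\nabla^\W \ln k_\xi$ (when $\ell > 0$), whence $\xi$ itself is smooth on the relevant leaves and the integration is legitimate; the Lipschitz function $\xi + \frac{1}{\ell}\ln k_\xi$ has zero gradient a.e., hence is constant. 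The case $\ell = 0$ is easier still, as it directly forces $\nabla^\W \ln k_\xi = 0$ a.e.\ and hence $k_\xi \equiv 1$ on full-measure leaves by the same connectedness argument, with no need to regularize $\xi$. I would close by noting that this $m$ is the ergodic harmonic measure claimed in the statement.
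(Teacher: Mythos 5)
Your proposal is correct and follows essentially the same route as the paper: chain Propositions \ref{Furst.}, \ref{basic} and \ref{entropy} through the ergodic measure with $\ell(m)=\ell$, then in the equality case integrate $\nabla^\W \ln k_\xi = -\ell\,\nabla^\W\xi$ using the normalizations $k_\xi(x_0)=1$, $\xi(x_0)=0$. Your extra care about $\xi$ being only Lipschitz (a Lipschitz function with vanishing a.e.\ gradient on a connected leaf is constant) is exactly the step the paper leaves implicit, and it is handled correctly.
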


\begin{proof} By Proposition \ref{basic}, for any harmonic measure $m$,  $\ell ^2(m) \leq k(m)$ with equality  only if the harmonic functions $k_\xi $ are such that,  $\nabla ^\W \ln k_\xi (x)= -\ell (m) \nabla ^\W \xi (x)$ for $m$-almost every $(x,\xi)$.  If  $\ell(m) =0 $,  $k_\xi $ is constant for $\nu $-almost every $\xi$. If $\ell (m) >0$, since both functions  $\ln k_\xi $ and $\xi $ vanish at $x_0$, $\ln k_\xi = -\ell (m) \xi$ for $\nu $-almost every $\xi$. All this applies to the measure $m_0$ given by proposition \ref{Furst.} so that:
$$ \ell ^2 \;  = \; \ell ^2(m_0) \; \leq \; k(m_0) \; \leq \; h, $$
with $\ell ^2 = h$ only if $\ell ^2 (m_0) = k(m_0) = h$ and therefore $ \ln k_\xi (x)= -\ell \xi (x) $ $m_0$-almost everywhere  in the case $\ell >0$, $k_\xi (x) = 1 $  $m_0$-almost everywhere in the case $\ell = 0 $. 

\end{proof}

Observe that in the case $\ell ^2 = h >0$, the harmonic measure given by Corollary \ref{final} gives full measure to  $\M \x \partial \M /G$ because $e^{-\ell d(x,z)}$ cannot be a harmonic function in $z$. In general the support of $m$ is smaller than  $\M \x \partial \M /G$: consider for instance $\M = \H^2 \x \H^2.$ We have $\ell ^2 = h = 2$. The space $\partial \M$ can be parametrized by $(\xi _1, \xi _2, \theta )$, where $\xi _j \in \partial \H^2$ for $j = 1,2$  and $\theta $ is an angle in $[0,\pi / 4]$; the horofunction $\xi _{\xi _1, \xi _2, \theta }$ is given by:
$$ \xi _{\xi _1, \xi _2, \theta } (z_1, z_2) \; =\; \cos \theta \; \xi _1(z_1) + \sin \theta \; \xi_2 (z_2). $$
The function $e^{- \sqrt {2} \xi } $ satisfies
$$ \D e^{- \sqrt {2} \xi _{\xi _1, \xi _2, \theta } }\;=\; \big(2 -\sqrt {2}(\cos \theta + \sin \theta)\big) e^{ - \sqrt {2} \xi _{\xi _1, \xi _2, \theta }}.$$
This is a harmonic function only if $\theta = \pi /4$. The support of the measure $m$ given by Corollary \ref{final} is included in $\M_{\pi /4} / G$, where $\M_{\pi /4} :=\{ (x, \xi); x \in \M , \xi = (\xi _1 , \xi _2 , \theta ) \in \partial \M {\textrm { and }} \theta = \pi /4 \}.$
The discussion is similar for any  symmetric space of non-positive curvature which is not of negative curvature. % $\pi /4$ being replaced by the direction in the center of the positive Weyl Chamber.

\

Theorem A is the first part of Corollary \ref{final}. When $h=0$, Corollary \ref{final} adds that there is an ergodic measure $m$ with $k_\xi(x) = 1$ $m$-almost everywhere: in terms of Proposition \ref{harmonic} (3) the measure $m$ is, in local charts, the product of the Lebesgue measure on the leaves and some transverse holonomy-invariant measure $\nu$. When $h >0$ and equality $\ell ^2 = h$ holds, one can conclude from Corollary \ref{final} that $(\MM , \nu) $ represents all bounded harmonic functions on $\M$ (cf. \cite {K1}). 

It remains to prove Propositions \ref{Furst.}, \ref{Vadim1} and \ref{Vadim2}. Proposition \ref{Furst.} is proven in Section 4. Proposition \ref{Vadim1} is due to Kaimanovich. We give a proof in section 5, because it follows the same computation as in the proof of Proposition \ref{Vadim2}  in Section 6.

\section{Proof of Proposition \ref{Furst.}}

Let $X_M$ be a horospheric suspension as above. We construct the measure $m$ by a limiting procedure (compare \cite{KL2}, proof of Theorem 7). To define a measure on $X_M$, we usually describe it as a $G$-invariant measure on $\M \times \MM$. It will project onto $M$ as Leb, and in particular, it will be a probability measure, as soon as it projects onto $\M$ as  $\wt {{\textrm {Leb}}} := \frac {dx}{{\textrm {vol}} (M)}.$  Set:
$$ \nu_t := \int _{\M} \left( \xi_\ast (\wt {p} (t, x, y ) dy ) \right)  \frac {dx}{{\textrm {vol}} (M)},$$
where, for a measure $\mu $ on $\M$,  $\xi _\ast (\mu)$ is the pushed-forward  of $\mu $ by the mapping $\xi : \M \to \MM.$ The measure $\nu_t$ is a $G$-invariant measure on $\M \times \MM$ which projects on $\wt {{\textrm {Leb}}}$ and we can write, for $f \in C_c (\M \times \MM)$, 
$$\int f(x, \xi) d\nu _t (x, \xi) \; = \; \int f(x, \xi_y) \wt {p}(t,x,y) \frac {dxdy}{{\textrm {vol}} (M)}. $$
We then form the measure $ \int \wt{q} (s, .,.) d\nu_t = \int \wt{q}(s,(x, \xi), d(y,\eta)) d\nu_t (x, \xi)$.The measure $ \int \wt{q} (s, .,.) d\nu_t  $ is a $G$-invariant measure on $\M \times \MM$ which projects on $\wt {{\textrm {Leb}}}$. Observe that $ \int \wt{q} (s, .,.) d\nu_t  = \nu _{t+s}$. Indeed, we may  write, for $f \in C_c (\M \times \MM)$, 
\begin{eqnarray*} \int f d\left( \int \wt{q} (s, .,.) d\nu_t  \right)\; & = &\; \int f(y, \eta) \wt {q}(s, (x,\xi _z), d(y,\eta)) \wt {p}(t,x,z) \frac {dzdx}{{\textrm {vol}} (M)} \\ &= &\; \int f(y, \xi_z) \wt {p}(s,x, y) \wt {p} (t,x,z) \frac {dydzdx}{{\textrm {vol}} (M)}.
\end{eqnarray*}
By the symmetry and the semigroup property of $\wt {p}$, $\int \wt{p} (s,x,y) \wt{p} (t,x,z) dx = \wt{p} (t+s, y,z)$ and we find, as claimed, 
 $$ \int f d\left(  \int \wt{q} (s, .,.) d\nu_t  \right) \; = \; \int f(y, \xi_z) \wt {p} (t+s , y, z) \frac {dydz}{{\textrm {vol}} (M)}\; = \; \int f d\nu _{t+s}.$$

The set of  measures on $X_M $ which project on Leb on $M$ is a convex weak* compact set of probability measures on $X_M$. Any limit point of $\frac {1}{T} \int_0^T \nu _t dt$ is a harmonic measure. Indeed, by the above observation 
$$\frac {1}{T} \int_0^T \nu_t dt \; = \; \frac {1}{T} \int_0^{T-1}\int \wt{q} (s, .,.) d\nu_1ds  + O(1/T),$$
so that, if $m_0 = \lim _k \frac {1}{T_k} \int_0^{T_k} \nu_t dt$, we have $$ \int \wt{q} (1, .,.) dm_0 = \lim _k \left(\frac {1}{T_k} \int_0^{T_k-1} \int \wt{q} (s+1, .,.) d\nu_1ds  + O(1/T_k)\right) \; = \; m_0.$$

Take $m_0$ such a limit. We choose a fundamental domain $M_0$ for $M$ and we compute $\ell (m_0)$:
\begin{eqnarray*}
 & & \tau \ell (m_0) = \\
 &= &\; \int \big( \xi (\omega (\tau))- \xi (\omega (0)) \big) d Q_m \\
 &=& \; \int _{M_0 \times \MM} \left(\int \big( \xi (y)- \xi (x) \big) \wt {p} (\tau , x,y) dy \right) dm_0(x,\xi)\\
 &=& \; \lim _k \frac {1}{T_k} \int_0^{T_k}  \int _{M_0 \times \MM} \left(\int \big( \xi (y)- \xi (x) \big) \wt {p} (\tau , x,y) dy \right) d\nu_t (x,\xi) dt \\
 &=& \; \lim _k \frac {1}{T_k} \int_0^{T_k}  \int _{M_0 \times \M} \left(\int \big( \xi_z (y)- \xi_z (x) \big) \wt {p} (\tau , x,y) dy \right) \wt{p}(t,x,z) \frac {dxdz}{{\textrm {vol}} (M)} dt \\
  &=& \; \lim _k \frac {1}{T_k} \int_0^{T_k}  \int _{M_0 \times \M\times \M} \big( d(z,y)- d(z,x) \big) \wt {p} (\tau , x,y)  \wt{p}(t,x,z) \frac {dxdydz}{{\textrm {vol}} (M)} dt \\
   &=& \; \lim _k \frac {1}{T_k} \int_0^{T_k} \Big( \int _{M_0 \times \M\times \M}  d(z,y) \wt {p} (\tau , x,y)  \wt{p}(t,x,z) \frac {dxdydz}{{\textrm {vol}} (M)}  \\
    & & \; \quad \quad \quad \quad \quad \quad - \int _{M_0 \times \M\times \M} d(z,x)  \wt {p} (\tau , x,y)  \wt{p}(t,x,z) \frac {dxdydz}{{\textrm {vol}} (M)} \Big) \; \; dt \\
 &=& \; \lim _k \frac {1}{T_k} \int_0^{T_k}  \Big ( \int _{M_0 \times \M}  d(z,y) \wt {p} ( t+\tau , z,y) \frac {dydz}{{\textrm {vol}} (M)} \\
 & & \; \quad \quad \quad \quad \quad \quad -  \int _{M_0 \times \M} d(z,x)    \wt{p}(t,x,z) \frac {dxdz}{{\textrm {vol}} (M)} \Big) \; \;  dt \\ 
 &=& \; \lim _k \frac {1}{T_k} \int_{0}^{ \tau } \left( \int _{M_0 \times \M}  d(x,z) \wt{p}(T_k + t,x,z) \frac {dxdz}{{\textrm {vol}} (M)} \right) dt \\
 & & \quad \quad \quad - \lim _k \frac {1}{T_k} \int_{0}^{\tau } \left( \int _{M_0 \times \M}  d(x,z) \wt{p}(t,x,z) \frac {dxdz}{{\textrm {vol}} (M)} \right) dt.
 \end{eqnarray*}
 
 The last term goes to 0 as $T_k  \to \infty$. Recall that $\ell $ is defined by the subadditive ergodic theorem so that  $\displaystyle \ell = \lim_{T \to \infty}  \frac{1}{T} \int_{M_0\times \M} d(x,z) \wt{p}(T,x,z) \frac {dxdz}{{\textrm {vol}} (M)}$. We have indeed $\tau \ell (m_0) = \tau \ell. $ The above measure $m_0$ is not necessarily ergodic, but since $\ell (m) \leq \ell $ for all harmonic measures and $m \mapsto \ell (m) $ is linear, there are ergodic measures $m$ in the extremal decomposition of $m_0$ which satisfy $\ell (m) = \ell $.
 
 \

 \section{Proof of Proposition \ref{Vadim1}}
 
 Let $m$ be a harmonic measure on $X_M$. We have to show that $ k(m) \;= \; \int_{X_M} \| \nabla ^\W \ln k_\xi \|^2 dm .$ We compute:
 \begin{eqnarray*}
 \tau k(m) \; &= &\; \int  \ln \frac {k_\xi (\om (0))}{\k_\xi (\om (\tau))} dQ_m  \\
 &=& \; \int_{M_0 \x \MM} \left( \int _{\M} \wt p(\tau,x,y) (\ln k_\xi (x) - \ln k_\xi (y)) dy \right) dm (x, \xi)
 \end{eqnarray*}
 
 The inner integral is 
 \begin{eqnarray*}
 &{}& \int _{\M} \wt p(\tau,x,y) (\ln k_\xi (x) - \ln k_\xi (y)) dy \; = \\
 &{}& \quad \quad \quad = \;  \int _{\M} \int_0^\tau \frac{\partial }{\partial s} \wt p(s ,x,y) (\ln k_\xi (x) - \ln k_\xi (y)) dsdy \\
  &{}& \quad \quad \quad = \;  \int _{\M} \int_0^\tau \D_y \wt p(s ,x,y) (\ln k_\xi (x) - \ln k_\xi (y)) dsdy \\
   &{}& \quad \quad \quad = \;   - \int_0^\tau \int _{\M}  \wt p(s ,x,y) \D_y \ln k_\xi (y) dsdy \\
     &{}& \quad \quad \quad = \;   \int_0^\tau \int _{\M}  \wt p(s ,x,y) \|\nabla _y \ln k_\xi (y) \|^2 dsdy. 
     \end{eqnarray*}
     
     Observe that the function $$ \vf (y,\xi ) =  \|\nabla _y \ln k_\xi (y) \|^2 $$ is $G$-invariant on $\M \x \MM$. Therefore the integral $$ \int _{M_0\x \MM} \left(  \int _{\M}  \wt p(s ,x,y) \vf(y,\xi) dy \right) dm(x,\xi) $$ is $\int \vf (\om (s), \xi (s) ) dQ_m .$ By invariance, we have, for all $s >0$, 
     $$  \int _{M_0\x \MM} \left(  \int _{\M}  \wt p(s ,x,y) \vf(y,\xi) dy \right) dm(x,\xi)  \; = \; \int   \|\nabla _y \ln k_\xi (y) \|^2 dm  $$ and the formula follows.
     
     \
     
     \section{Proof of Proposition \ref{Vadim2}}
     
 Recall that the horofunctions are Lipschitz, so that $Z_\xi = \nabla ^\W \xi $ exists almost everywhere along the leaves and satisfies $\|Z_\xi \|^2 \leq 1 $ $m$-almost everywhere. In particular the expression $\int_{X_M} \< Z_\xi, \nabla ^\W \ln k_\xi \> dm $ makes sense as soon as  $m$ has absolutely continuous conditional measures along the leaves. In this section, we prove that, if $m$ is a harmonic measure,  $\ell (m) = - \int_{X_M} \< Z_\xi, \nabla ^\W \ln k_\xi \> dm $.
 
 We follow the same computation as in Section 5, except that, for technical reasons we choose $\e >0$ and write:
 \begin{eqnarray*}
 \tau \ell (m) \; &= & \; \int \big(L(\e + \tau, \om , \xi) - L(\e, \om ,\xi) \big) dQ_m\\
 &=& \;  \int_{M_0 \x \MM} \left( \int _{\M} \wt p(\e + \tau,x,y) (\xi(y) - \xi (x) )dy \right) dm (x, \xi)\\
 &{}& \quad \quad \quad -  \int_{M_0 \x \MM} \left( \int _{\M} \wt p(\e,x,y) (\xi(y) - \xi (x) )dy \right) dm (x, \xi)\\
  &=& \; \int_{M_0 \x \MM} \left( \int _{\M} \wt p(\tau,x,z) (\vf _\e (z,\xi) - \vf_\e (x,\xi)) dz \right) dm (x, \xi),
  \end{eqnarray*}
  where
  $$ \vf _\e (x,\xi ) : = \int _{\M} \wt p(\e, x , y ) \xi (y) dy .$$
  
  Observe that, since the manifold $\M$ has bounded Ricci curvature, for any $s>0$ there is a constant $C(s)$ such that, for all $x, y \in \M$, $ \wt p(s,x,y) \leq C e^{-(d(x,y)/C)^2} $(\cite{CLY}). This shows that the function $\vf _\e$ is well defined and that we can separate in the above computation the integrals of $\xi (y) $ and $\xi (x)$.
  
  For all $\e >0$, the function $\vf _\e $ is smooth and satisfies:
  $$ \vf _\e (gx, g\xi ) \; = \; \vf _\e (x, \xi) - \xi (g^{-1} x_0).$$
  The inner integral is
  \begin{eqnarray*}
 &{}& \int _{\M} \wt p(\tau,x,z) (\vf _\e (z,\xi) - \vf_\e (x,\xi)) dz \; = \\
 &{}& \quad \quad \quad = \;  \int _{\M} \int_0^\tau \frac{\partial }{\partial s} \wt p(s ,x,z) (\vf _\e (z,\xi) - \vf_\e (x,\xi)) dsdz \\
  &{}& \quad \quad \quad = \;  \int _{\M} \int_0^\tau \D_z \wt p(s ,x,z) (\vf _\e (z,\xi) - \vf_\e (x,\xi)) dsdz \\
   &{}& \quad \quad \quad = \;   \int_0^\tau \int _{\M}  \wt p(s ,x,z) \D_z \vf_\e (z,\xi)  dsdz.
 \end{eqnarray*}

The function $\D_z \vf_\e (z,\xi) = {\textrm {div}}_z \nabla _z \vf_\e (z, \xi) $ is $G$-invariant and as before, we have, for all $s > 0$, 
$$  \int_{M_0 \x \MM} \left( \int _{\M} \wt p(s,x,z) \D_z \vf _\e (z,\xi)  dz \right) dm (x, \xi)  =  \int {\textrm {div}}^\W \nabla ^W \vf_\e (y, \xi) dm(y,\xi). $$
Using equation (\ref{Green}), the latter integral is $- \int \< \nabla ^\W \vf _\e, \nabla ^\W \ln k_\xi \> dm $, so that 
$$ \ell (m) \; = \; - \int \< \nabla ^\W \vf _\e, \nabla ^\W \ln k_\xi \> dm .$$

Fix $\xi$. As $\e \to 0 $, the functions $\vf _\e (x, \xi)$ are uniformly Lipschitz and converge towards $\xi$ uniformly on compact sets. Their gradients, seen as their weak gradients, converge in $L^\infty_{loc}$ towards the gradient $Z_\xi $ of the limit (\cite {EG}, Theorem 4.2.3). This proves the formula in Proposition \ref{Vadim2}, namely:
$$\ell (m) = - \int_{X_M} \< Z_\xi, \nabla ^\W \ln k_\xi \> dm .$$

\

Proposition \ref{Vadim2} was proven by Kaimanovich (\cite{K1}) with the additional hypothesis that the horofunctions are of class $C^2$. In the above proof, we can, in that case, take directly $\e = 0 $.  Recall that the horofunctions are of class $C^2$ when $M$ has nonpositive sectional curvature and $\M$ is the universal cover of $M$ (\cite{HI}).

\

{\it Acknowledgement.} We are grateful to Xiaodong Wang for his questions, interest and remarks. This work was supported in part by NSF Grant DMS-0801127 and in part by C.N.R.S.,  UMR 7599.  

\

\small


\begin{thebibliography}{99}

\bibitem[{\bf A}]{A} M. T. Anderson, The Dirichlet Problem at infinity for manifolds of negative curvature, {\em J. Diff. Geom.} {\bf 18} (1983), 701--721.

\bibitem[{\bf  BCG}]{BCG} G. Besson, G. Courtois and S. Gallot, Entropies et rigidit\'es des espaces localement sym\'etriques de courbure strictement n\'egative, {\em Geom. Func. Anal.}   {\bf 5} (1995), 731--799.

\bibitem[{\bf BFL}]{BFL} Y. Benoist, P. Foulon and F. Labourie,  Flots d'Anosov \`a distributions stables et instables diff\'erentiables, {\em  J. Amer. Math. Soc. }{\bf 5} (1992), 33--74.

\bibitem[{\bf  CLY}]{CLY} S. Y. Cheng, P. Li and S. T. Yau,  On the upper estimate of the heat kernel of a complete Riemannian manifold, {\em Amer. J. Math. }{\bf 103} (1981), 1021--1063.

\bibitem[{\bf  D}]{D} Y. Deriennic,  Quelques applications du th\'eor\'eme ergodique sous-additif, {\em  Ast\'erisque }{\bf  74} (1980), 183--201.

%\bibitem[{\bf E }]{E}   A. Erschler,  On drift and entropy growth for random walks on groups, {\em Ann. Prob. }{\bf 31} (2003), 1193--1204.

\bibitem[{\bf EK}]{EK}   A. Erschler and A. Karlsson, Homomorphisms to $\R$ constructed from random walks, {\em  preprint}.

\bibitem[{\bf  EG}]{EG}  L. C. Evans and R. F. Gariepy,  Measure Theory and Fine Theory of Functions, {\em  Studies in advanced mathematics} (1992), CRC Press.

\bibitem[{\bf  FL}]{FL}   P. Foulon and F. Labourie,  Sur les vari\'et\'es compactes asymptotiquement harmoniques, {\em  Invent. Math. }{\bf 109} (1992), 97--111.
 
\bibitem[{\bf Ga}]{Ga} L. Garnett,   Foliations, the ergodic theorem and Brownian motion, {\em J. Funct. Anal. } {\bf 51} (1983), 285--311.

\bibitem[\bf Gu] {Gu}Y. Guivarc'h, Sur la loi des grands nombres et le rayon spectral
d'une marche al\'{e}atoire, \emph{Ast\'{e}risque,} {\bf 74} (1980) 47--98

\bibitem[{\bf H}]{H} U. Hamenst\"adt,  Harmonic measures for compact negatively curved manifolds, {\em Acta Mathematica} {\bf 178} (1997), 39--107.

\bibitem[{\bf HI}]{HI}   E. Heintze and H.-C. Im Hof,  Geometry of horospheres,  {\em  J. Diff. Geom.} {\bf 12} (1977), 481--491.

 \bibitem [\bf K1]{K1}V. A. Kaimanovich, Brownian motion and harmonic
functions on covering manifolds. An entropic approach, {\em Soviet Math.
Dokl.} {\bf 33} (1986) 812--816.

\bibitem[{\bf K2}]{K2} V.A. Kaimanovich,  Brownian motion on foliations: Entropy, invariant measures, mixing, {\em  Funct. Anal. Appl.} {\bf 22} (1989), 326--328.

\bibitem[{\bf  KV}]{KV}  V. A. Kaimanovich and A. M. Vershik,  Random walks on discrete groups: boundary and entropy,  {\em  Ann. Prob.} {\bf 11} (1983), 457--490.

\bibitem[{\bf KL1}]{KL1}   A. Karlsson and F. Ledrappier,  Propri\'et\'e de Liouville et vitesse de fuite du mouvement brownien,  {\em  C. R. Acad. Sciences Paris, S\'er. I } {\bf 344} (2007), 685--690.

\bibitem[{\bf KL2}]{KL2} A. Karlsson and F. Ledrappier,  Noncommutative ergodic Theorems, {\em preprint}.

\bibitem[{\bf L1}]{L1} F. Ledrappier, Harmonic measures and Bowen-Margulis measures, {\em Israel J. Math.} {\bf71} (1990), 275--287.

\bibitem[{\bf L2}]{L2} F. Ledrappier, Ergodic properties of the stable foliations, {\em Springer Lect. Notes Math.} {\bf 1514} (1992), 131--145.

\bibitem[{\bf P}]{P}  R. Pinsky,  Positive harmonic functions and diffusions,  {\em  Cambridge University Press}  (1995).

\bibitem[{\bf  S}]{S}   D. Sullivan,  The Dirichlet problem at infinity for a negatively curved manifold,  {\em  J. Diff Geom.} {\bf 18} (1983), 722--732.

\bibitem[{\bf Va}]{Va}   N. Varopoulos,  Long range estimates for Markov chains,  {\em  Bull. Sci. Math.} {\bf 109} (1985), 225--252.

\bibitem[{\bf  Y}]{Y}   C. Yue,  Brownian motion on Anosov foliations and manifolds of negative curvature,  {\em J. Diff. Geom.} {\bf 41} (1995), 159--183.

%\bibitem[{\bf Ve}] {Ve}A. Vershik, Dynamic theory of growth in groups: entropy,
%boundary, examples, \emph{Russian Math. Surveys }55 (2000) 667--733.


\end{thebibliography}
\end{document}